\def\@endtheorem{\endtrivlist}
\newtheorem{teo}{Theorem}[section]
\newtheorem{defin}[teo]{Definition}
\newtheorem{prop}[teo]{Proposition}
\newtheorem{cor}[teo]{Corollary}
\newtheorem{lemma}[teo]{Lemma}
\theoremstyle{definition}
\newtheorem{remark}[teo]{Remark}
\newtheorem{ex}{Example}
\newtheoremstyle{dico}
 {\baselineskip}   
  {\topsep}   
  {}  
  {0pt}       
  {} 
  {.}         
  {5pt plus 1pt minus 1pt} 
  {}          
\theoremstyle{dico}
\newtheorem{say}[teo]{}
\numberwithin{equation}{section}
\newcommand{\go}{\operatorname{GO}}
\newcommand{\gsp}{\operatorname{GSp}}
\newcommand{\ra}{\rightarrow}
\newcommand{\C}{\mathbb{C}}
\newcommand{\R}{\mathbb{R}}
\newcommand{\Zeta}{{\mathbb{Z}}}
\newcommand{\QQ}{{\mathbb{Q}}}
\newcommand{\meno}{^{-1}}
\newcommand{\alfa}{\alpha}
\newcommand{\vacuo}{\emptyset}
\newcommand{\La}{\Lambda}
\newcommand{\la}{\lambda}
\newcommand{\restr}[1]          {\vert_{#1}}
\newcommand{\End}{\operatorname{End}}
\newcommand{\Lie}{\operatorname{Lie}}
\renewcommand{\setminus}{-}
\newcommand{\om}{\omega}
\newcommand{\eps}{\varepsilon}
\renewcommand{\phi}{\varphi}
\newcommand{\lds}{\ldots}
\newcommand{\cds}{\cdots}
\newcommand{\cd}{\cdot}
\newcommand{\im}{\operatorname{im}}
\newcommand{\sx}{\langle}
\newcommand{\xs}{\rangle}
\newcommand{\lra}{\longrightarrow}
\newcommand{\demi}{\frac{1}{2}}
\newcommand{\ga}{\gamma}
\newcommand{\Ga}{\Gamma}
\newcommand{\rang}{\operatorname{rank}}
\newcommand{\rank}{\operatorname{rank}}
\newcommand{\lieg}{\mathfrak{g}}
\newcommand{\liem}{\mathfrak{m}}
\newcommand{\OO}{\mathcal{O}}
\newcommand{\est} {\Lambda}
\newcommand{\Gl}{\operatorname{GL}}
\newcommand{\GL}{\operatorname{GL}}
\newcommand{\gl}{\mathfrak{gl}}
\newcommand{\chern}{\operatorname{c}}   
\newcommand{\Ad}{\operatorname{Ad}}
\newcommand{\PP}{\mathbb{P}}
\renewcommand{\phi}             {\varphi}
\newcommand{\HH}{\mathfrak{H}}
\newcommand{\sieg}{\HH}
\newcommand{\Hg}{\HH_g}
\newcommand{\mt}{\operatorname{MT}}
\newcommand{\Sp}                {\operatorname {Sp}}
 \newcommand{\U}                 {\operatorname {U}}    
\newcommand{\liu}                 {\mathfrak {u}}
\newcommand{\liek}{\mathfrak{k}}
\newcommand{\liel}{\mathfrak{l}}
\newcommand{\lieq}{\mathfrak{q}}
 \newcommand{\barj}              {j}       
 \newcommand{\barf}              {\bar{f}}       
 \newcommand{\barB}              {\bar{B}}       
\newcommand{\barb}              {\bar{B}}       
 \newcommand{\barx}              {\bar{X}}       
 \newcommand{\barX}              {\bar{X}}       
\newcommand{\mg}{\mathsf{M}_g}
\newcommand{\Mg}{\mathsf{M}_g}
\newcommand{\A}{\mathsf{A}}
\newcommand{\Ag}{\mathsf{A}_g}
\newcommand{\des}{\mathbb{S}}
\newcommand{\deli}{\mathbb{S}}
\newcommand{\inv}{\mathrm{inv}}
\newcommand{\prim}{\phantom{}_\mathrm{prim}}
\newcommand{\prima}{H}
\newcommand{\Hi}{H^{\mathrm{inv}}}
\newcommand{\bul}{\bullet}
\newcommand{\hB}{\hat{B}}
\newcommand{\M}{\mathsf{M}}
\newcommand{\Mm}{\mathsf{M}^{(m)}}
\newcommand{\T}{\mathsf{T}}
\newcommand{\mb}{\overline{\M}}
\newcommand{\Mb}{\overline{\M}}
\newcommand{\bary}{\bar{Y}}
\newcommand{\Pic}{\operatorname{Pic}}
\newcommand{\mihi}[1] {} 
\newcommand{\dodo}{\Delta_1^0}
\newcommand{\Sym}{\mathscr{Z}}
\newcommand{\sime}{\Sym_g}
\newcommand{\ad}{{\operatorname{ad}}}
\newcommand*{\tras}[2][-3mu]{\ensuremath{\mskip1mu\prescript{\smash{\mathrm t\mkern#1}}{}{\mathstrut#2}}}%
\newcommand{\vr}{V\restr{B}}
\newcommand{\bi}{p}
\newcommand{\uu}{U}
\newcommand{\bombolo}{\mathbb{F}}
\begin{document}


\author{Paola Frediani, Alessandro Ghigi and Gian Pietro Pirola}

\title{Fujita decomposition and Hodge loci}

\address{Universit\`{a} di Pavia} \email{paola.frediani@unipv.it}
\email{alessandro.ghigi@unipv.it} 
 \email{gianpietro.pirola@unipv.it}

\thanks{The authors were partially supported by MIUR PRIN 2015
  ``Moduli spaces and Lie theory'' and by GNSAGA of INdAM.  The first
  author was also partially supported by FIRB 2012 `` Moduli Spaces
  and their Applications''.  }  \subjclass[2000]{
  14C30;
  14D07;
  14H10;14H15;
  14H40;
  32G20
}

\begin{abstract}
  This paper contains two results on Hodge loci in $\Mg$.  The first
  concerns fibrations over curves with a non-trivial flat part in the
  Fujita decomposition.  If local Torelli theorem holds for the fibres
  and the fibration is non-trivial, an appropriate exterior power of
  the cohomology of the fiber admits a Hodge substructure. In the case
  of curves it follows that the moduli image of the fiber is
  contained in a proper Hodge locus.  The second result deals with
  divisors in $\Mg$. It is proved that the image under the period map
  of a divisor in $\Mg$ is not contained in a proper totally geodesic
  subvariety of $\Ag$. It follows that a Hodge locus in $\Mg$ has
  codimension at least 2.


\end{abstract}

\maketitle

\tableofcontents{}

\section{Introduction}

This paper contains two results concerning Hodge loci.

\medskip

The first one relates Hodge loci to the second Fujita decomposition.
Let $\barx$ be a complex projective manifold of dimension $n+1$ and
let $\barf: \barx \lra \barb $ be a fibration onto a smooth projective
curve $\barB$.  Denote by $B$ the set of regular values of $\barf$.

Fujita decomposition says roughly that the Hodge bundle splits as a
direct sum of an ample vector bundle and a unitary flat bundle, see
\cite{fujita-fiber,kollar,cadett-1,cadett-2,cadett-3} and Section
\ref{sec:fujita}.  Let $d$ be the rank of the flat summand in the
Fujita decomposition.


Our first result is as follows.

{
\begin{teo} [See Theorem \ref{cor2}]
  \label{main1}
  Assume that $d >0$ and that for generic $b \in B$ the 
  IVHS map $T_{b}B \otimes H^{n,0}(X_b) \ra H^{n-1,1} (X_b)$ is
  non-zero.  Then for any $b\in B$ the Hodge structure
  $\est^d H^n(X_b)\prim $ admits a proper substructure.
\end{teo}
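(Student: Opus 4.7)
The plan is to construct a proper sub-variation of Hodge structure of $\est^d\mathbb{V}$, where $\mathbb{V}:=R^n\barf_*\QQ|_B\prim$ denotes the primitive variation of Hodge structure of weight $n$ on $B$; restricting to each fibre $b\in B$ will then produce the desired proper Hodge substructure of $\est^d H^n(X_b)\prim$.

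The construction begins from the Fujita decomposition, which provides inside $\barf_*\omega_{\barx/\barB}|_B\cong F^n\mathbb{V}_\C$ a flat unitary holomorphic subbundle $\mathbb{U}$ of rank $d$. Taking the top exterior power gives a flat complex line subbundle $L:=\est^d\mathbb{U}$ of $F^{nd}(\est^d\mathbb{V}_\C)=\est^d(F^n\mathbb{V}_\C)$. In every fibre the monodromy representation of $\pi_1(B)$ on $\est^d\mathbb{V}_b$ then preserves the complex line $L_b$, acting on it through a unitary character.

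Let $\mathbb{W}_\QQ\subset \est^d\mathbb{V}_\QQ$ be the smallest $\pi_1(B)$-stable $\QQ$-subspace whose complexification contains $L$. This defines a $\QQ$-sub-local-system $\mathbb{W}$ of $\est^d\mathbb{V}$, and by Deligne's semisimplicity theorem for polarizable variations of Hodge structure it underlies a sub-VHS. For every $b\in B$ the fibre $\mathbb{W}_b\subset \est^d H^n(X_b)\prim$ is then a Hodge substructure, non-zero because it contains $L_b$.

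The remaining, and main, step is to show $\mathbb{W}\neq \est^d\mathbb{V}$, and this is where the IVHS hypothesis is used essentially. Non-vanishing of the IVHS is equivalent to non-vanishing of the second fundamental form of $F^n\subset\mathbb{V}$; by a Leibniz-type calculation this propagates to non-vanishing of the second fundamental form of $F^{nd}$ in $\est^d\mathbb{V}$. The flat line $L$, by contrast, is annihilated by the second fundamental form. If $\mathbb{W}$ were all of $\est^d\mathbb{V}$, then the $\pi_1(B)$- and $\QQ$-propagation of $L$ would have to fill out $F^{nd}(\est^d\mathbb{V})$, which would force this top Hodge piece to be a flat subbundle, contradicting what we just established. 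The main technical difficulty is making this implication rigorous: formation of the $\QQ$-envelope and of the $\pi_1(B)$-orbit does not respect the Hodge filtration in any obvious way, and one has to argue instead through the sub-VHS structure of $\mathbb{W}$ and a careful analysis of how $F^{nd}\mathbb{W}$ is generated from the flat line $L$.
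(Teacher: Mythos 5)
Your construction of the rational envelope $\mathbb{W}$ rests on the claim that any $\QQ$-sub-local system of a polarizable VHS over a quasi-projective base underlies a sub-VHS ``by Deligne semisimplicity''. That principle is false: for a constant VHS every rational subspace is a sub-local system, but a generic rational subspace is not a Hodge substructure (for weight $1$ no rational line is). Semisimplicity of the monodromy does not convert sub-local systems into sub-VHS; the only sub-local system that comes for free with a Hodge structure is the \emph{fixed part} (Schmid's theorem of the fixed part). This is exactly why the paper does not take the minimal rational envelope of $L=\est^d\mathbb{U}$: instead it uses Deligne's theorem (Hodge II, 4.2.8(iii)b), recalled as Theorem \ref{deligne}, to see that the rank-one subsystem $\est^d G$ has \emph{finite} monodromy --- your unitary character is not known to have finite image without this input --- passes to the associated finite \'etale cyclic cover $u:\hB\ra B$ where this line becomes trivial, and only then concludes that it lies inside the fixed part $(\est^d u^*H)^\inv$ (resp.\ $E_d(u^*H)^\inv$), which is a rational sub-VHS by Theorem \ref{schmid}. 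Without the finite-cover step your $\mathbb{W}$ is not known to be a Hodge substructure at any fibre, so the basic object of your proof is not yet constructed.

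The properness step is also a genuine gap, and not just a matter of polishing. In your scheme, $\mathbb{W}=\est^d\mathbb{V}$ carries no flatness information whatsoever about the Hodge filtration --- a rational envelope being the whole local system does not force $F^{nd}$ to be flat --- so the intended contradiction with the IVHS hypothesis does not get off the ground. The paper's mechanism is different: the dichotomy (Corollary \ref{bau}) is that either the fixed part gives a proper substructure at every fibre, or the fixed part is everything, in which case $K_d(H)$ is \emph{isotrivial}; isotriviality does give flatness of the $(nd,0)$-piece $\est^d F^n$, and then a curvature computation (Lemma \ref{lemure1}: restrict $\est^d D$ to the summand $\est^{d-1}U\restr{B}\otimes A\restr{B}$ and use $R_U=0$ to force $R_A=0$) shows the whole Hodge bundle $V\restr{B}$ is flat, whence by Lemma \ref{lemma7} it is $\nabla$-invariant and the IVHS vanishes identically, contradicting the hypothesis. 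Note also that your claimed ``Leibniz propagation'' of non-vanishing of the second fundamental form from $F^n$ to $\est^d F^n$ is not automatic (cancellations can occur); the paper avoids it precisely by exploiting the flat factor $U$ in the tensor decomposition. To repair your argument you would need both the finite-monodromy/fixed-part step and an analogue of these two lemmas.
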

}

We notice that for $n$ odd the Hodge structure $\est^d H^n (F)\prim$
always splits as a non-trivial direct sum of Hodge substructures.  For
odd $n$ the substructure provided by our result lies in one particular
piece of $\est^dH^n(F)$, which we denote by $E_d$, see \ref{sayspsp}.

When the fibers are curves we deduce the following.

\begin{teo}[See Theorem \ref{teolocus}]
  \label{main2}
  If $n=1$ and $\barf : \barX \ra \barb$ is a non-isotrivial fibration
  with $d > 0 $, then the image of $B$ in $\Mg$ is contained in a
  proper Hodge locus of $\Mg$.
\end{teo}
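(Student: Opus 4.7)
The plan is to apply Theorem \ref{cor2} with $n=1$ and to interpret the proper sub-Hodge structure of $\est^d H^1(X_b)$ it produces as yielding a proper Hodge locus of $\Mg$ that contains the moduli image of $B$.

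First, I would check that the IVHS hypothesis of Theorem \ref{cor2} is automatic for non-isotrivial families of curves. Non-isotriviality of $\barf$ means the moduli map $B\to\Mg$ is non-constant, so the Kodaira-Spencer class $\kappa_b\colon T_bB\to H^1(X_b,T_{X_b})$ is non-zero on a dense open subset of $B$. The IVHS map factors as $\kappa_b$ composed with the cup product $H^1(T_{X_b})\otimes H^0(\Omega^1_{X_b})\to H^1(\OO_{X_b})$, which is Serre-dual to the multiplication map of holomorphic $1$-forms. Using that the Torelli morphism $\Mg\to\ag$ has injective differential at the generic point (and still has injective differential along the hyperelliptic locus when restricted to it), non-vanishing of $\kappa_b$ forces non-vanishing of the IVHS map at a generic fibre. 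Theorem \ref{cor2} therefore applies and produces a proper sub-Hodge structure $W\subset\est^d H^1(X_b)$, sitting inside the piece $E_d$ of \S\ref{sayspsp}.

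Next I would globalize $W$ over $B$ and translate it into a Hodge locus. Because $W$ is built from the unitary flat summand of the Fujita decomposition, it extends to a flat sub-local-system $\mathcal{W}\subset\est^d R^1\barf_*\QQ$ (possibly after a finite \'etale base change of $B$) whose fibre at every $b\in B$ is a sub-Hodge structure; equivalently, the orthogonal projector $\pi_W$ is a global flat section of $\End(\est^d R^1\barf_*\QQ)$ of pure type $(0,0)$. By Cattani-Deligne-Kaplan this Hodge class cuts out a Hodge locus in $\ag$ that contains the period image of $B$, and its pullback under the Torelli morphism $\Mg\to\ag$ is a Hodge locus of $\Mg$ containing the moduli image of $B$.

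Finally, to show that this Hodge locus is proper, I would observe that a very general $[A]\in\ag$ has Mumford-Tate group $\gsp_{2g}$, which by classical representation theory of the symplectic group acts irreducibly on the piece $E_d\subset\est^d H^1(A)$. Hence no proper sub-Hodge structure of $E_d$ can exist at a very general point of $\ag$, so the Hodge locus is a proper subvariety of $\ag$, and its preimage in $\Mg$ is the desired proper Hodge locus. The main obstacle is the globalization step: one must verify that the pointwise sub-structures produced by Theorem \ref{cor2} do assemble into a flat sub-local-system of $\est^d R^1\barf_*\QQ$, which should follow from the flatness of the Fujita summand used to define $W$; the representation-theoretic irreducibility used in the properness step is standard once $E_d$ is identified as an irreducible $\gsp_{2g}$-subspace.
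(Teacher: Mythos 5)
Your overall strategy is the one the paper follows (Theorem \ref{cor2} plus the Fujita flat summand produce a proper substructure of $E_d(H_b)$, its orthogonal projector is a rational class of type $(0,0)$, and this cuts out a Hodge locus containing the moduli image of $B$), and the globalization step you flag as the main obstacle is in fact unproblematic: the substructure is by construction the monodromy-fixed part of $E_d(u^*H)$ after the \'etale cyclic cover of Theorem \ref{main}, hence a rational flat subsystem which is a subvariation by Schmid's theorem. The paper even avoids globalizing altogether: it argues pointwise, attaches a Hodge locus $Z_b$ to each $b\in B$, and uses that the Hodge loci of $\Mg$ form a countable family, so the irreducible image of $B$ lies in a single $Z_{b_0}$ by a Baire argument --- a point your Cattani--Deligne--Kaplan formulation still needs in order to land in one irreducible component. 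Your treatment of the IVHS hypothesis (Kodaira--Spencer plus infinitesimal Torelli, with the hyperelliptic caveat) is acceptable but more delicate than necessary; the cleaner route, which is what the paper's appeal to Torelli amounts to, is that generic vanishing of the IVHS map forces the period map of the family to be constant, hence the fibration isotrivial.

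The genuine gap is in your properness step. You show that the Hodge locus $Z\subset\Ag$ is a proper subvariety because a very general abelian variety has Mumford--Tate group $\gsp$ and $E_d$ is $\gsp$-irreducible, and then conclude that "its preimage in $\Mg$ is the desired proper Hodge locus." Properness of $Z$ in $\Ag$ does not imply that $j\meno(Z)$ is proper in $\Mg$: a priori the entire Jacobian locus could be contained in a proper Hodge locus of $\Ag$ of this type, in which case $j\meno(Z)=\Mg$. What is actually needed is that a \emph{very general curve} $C$ satisfies $\mt(C)=\gsp(H^1(C,\QQ))$, so that $E_d(H^1(C))$ is an irreducible Hodge structure and $j([C])$ lies in no such locus; this is exactly Proposition \ref{vgmt}, whose proof requires the nontrivial input that the mapping-class-group monodromy has finite index in $\Sp(2g,\Zeta)$, together with Borel density and Voisin's result on Mumford--Tate groups at very general points. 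With that statement substituted for the very general abelian variety, your argument closes; as written, the final inference does not follow.
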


The Hodge locus containing the moduli image of $B$ is defined by a
substructure of $ \est^d H^1$.  It would be interesting to investigate
the structure of such loci.  In the case $d=1$ these loci have been
studied for example in \cite
{pirola-base-number,cp,ciliberto-geer-teixidor,ciliberto-geer}.

A similar result holds for complete intersections with ample canonical
bundle, see Theorem \ref {appli}.

These results rely on some important theorems on variations of Hodge
structure due to Deligne and Schmid. These are recalled together with
some preliminary facts in Section \ref {notazione}.  The proofs of the
above results are contained in Section \ref{sec:fujita}.

In Section \ref{sec:examples} we describe the Hodge substructure
provided by the theorem in two examples due to Catanese and Dettweiler
\cite{cadett-3}.

\medskip

The second part of the paper deals with Hodge loci from another
perspective.  Let $j : \Mg \ra \Ag$ be the period map. A Hodge locus
of $\Mg$ is nothing else than $j\meno(Z)$ for a Hodge locus
$Z \subset \Ag$.  Hodge loci in $\Ag$ have an important property: they
are totally geodesic subvarieties of $\Ag$, when $\Ag$ is endowed with
the Siegel metric, i.e. it is considered as a locally symmetric
orbifold.

The main result of Section \ref{sec:hodge-moduli} is the following.
\begin{teo}[See Theorem \ref{notg}]
  \label{main3}
  If $g\geq 3$ and $Y \subset \M_g$ is an irreducible divisor, then
  there is no proper totally geodesic subvariety of $\Ag$ containing
  $j(Y)$.
\end{teo}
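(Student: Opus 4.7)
The strategy is a contradiction argument via the second fundamental form of $j(\Mg)\subset \Ag$, where $\Ag$ is given the Siegel locally symmetric metric, combined with the known expression of this second fundamental form in terms of Gaussian maps on the curve. Assume $Y\subset \Mg$ is an irreducible divisor and $j(Y)\subset Z$ with $Z\subsetneq \Ag$ proper totally geodesic. Pick a generic $[C]\in Y$; since $g\geq 3$ we may assume $C$ non-hyperelliptic, so $dj_{[C]}$ is injective at $p=j([C])$. Via Serre duality, identify $T_{[C]}\Mg$ with $H^0(2K_C)^\ast$ and the conormal space $N^\ast_{p} j(\Mg)$ with $I_2(K_C)\subset \operatorname{Sym}^2 H^0(K_C)$, the space of quadrics through the canonical curve (this uses that the codifferential of $j$ is the multiplication map $\operatorname{Sym}^2 H^0(K_C)\to H^0(2K_C)$).

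Because $Z$ is totally geodesic in $\Ag$, the second fundamental form of $j(Y)$ in $\Ag$ coincides with its second fundamental form in $Z$, whose image sits in $T_p Z/T_p j(Y)$. Projecting to $N_p j(\Mg)=T_p\Ag / T_p j(\Mg)$, it follows that the second fundamental form $\rho$ of $j(\Mg)\subset \Ag$, restricted to $\operatorname{Sym}^2 T_{[C]}Y$, has image in a \emph{proper} subspace of $N_p j(\Mg)$ (of codimension at least $\dim \Ag-\dim Z\geq 1$). Dualising yields a nonzero $w\in I_2(K_C)$ such that $\rho^\ast(w)\in \operatorname{Sym}^2 H^0(2K_C)$ annihilates $\operatorname{Sym}^2 T_{[C]}Y$. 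Since $Y$ is a divisor, $T_{[C]}Y=\ker\eta$ for some nonzero $\eta\in H^0(2K_C)$, and elementary linear algebra forces any symmetric $2$-tensor on $H^0(2K_C)^\ast$ vanishing on $\ker\eta$ to be divisible by $\eta$; thus $\rho^\ast(w)\in \eta\cdot H^0(2K_C)$. Invoking the known identification of $\rho$, after post-composition with the multiplication $\operatorname{Sym}^2 H^0(2K_C)\to H^0(4K_C)$, with the second Gaussian map $\mu_2\colon I_2(K_C)\to H^0(4K_C)$, this translates into the divisibility statement $\eta\mid \mu_2(w)$ in $\bigoplus_n H^0(nK_C)$.

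The main obstacle, and what would occupy the technical core of a full proof, is deriving a contradiction from the fact that such pairs $(w_C,\eta_C)\in I_2(K_C)\times H^0(2K_C)$, both nonzero and satisfying $\eta_C\mid \mu_2(w_C)$, exist for a divisor's worth of $[C]$. In small genus this is immediate by dimension: for $g=3,4$ the maximal proper totally geodesic subvarieties of $\Ag$ (products $\A_{g_1}\times\cdots\times\A_{g_k}$ and sub-Hermitian-symmetric subdomains of type AIII, DIII, etc.) all have dimension strictly smaller than $\dim Y=3g-4$, so the inclusion $j(Y)\subset Z$ is already impossible. For $g\geq 5$ the argument reduces to a non-degeneracy statement for the second Gaussian map on general curves, namely that the closed locus in $\Mg$ on which some nonzero $w\in I_2(K_C)$ has $\mu_2(w)$ divisible by a nonzero quadratic differential has codimension at least two. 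Results of this flavour rest on the work of Colombo--Pirola (and subsequent refinements) on the injectivity/surjectivity of $\mu_2$ for general curves; the delicate point is to upgrade them to a uniform codimension-two statement in every genus.
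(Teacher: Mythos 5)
Your argument is a plan rather than a proof: the step you yourself flag as ``the main obstacle'' is exactly the mathematical content of the theorem, and it is not established. For $g\geq 5$ you reduce everything to the claim that the locus of curves $[C]$ admitting a nonzero $w\in I_2(K_C)$ and a nonzero $\eta\in H^0(2K_C)$ with $\eta\mid\mu_2(w)$ has codimension at least two in $\Mg$; no result of Colombo--Pirola type in the literature gives this uniform statement, and you offer no argument for it, so the contradiction is never derived. There are also problems upstream. First, from $j(Y)\subset Z$ you conclude that the second fundamental form of $j(\Mg)$ restricted to $\operatorname{Sym}^2T_{[C]}Y$ lands in a proper subspace of the normal space of codimension at least $\dim\Ag-\dim Z$; in fact the image of $T_pZ$ in $T_p\Ag/T_pj(\Mg)$ has codimension only at least $\dim\Ag-\dim Z-1$ (since $\dim(T_pZ\cap T_pj(\Mg))$ may be as small as $\dim j(Y)$), so you need $\operatorname{codim}Z\geq 2$, which requires an extra input such as the Berndt--Olmos bound. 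Second, the case $g=4$ is not ``immediate by dimension'': Berndt--Olmos allows real codimension $4$ in $\sieg_4$, i.e.\ a totally geodesic subvariety of complex dimension $8=3g-4$, and your appeal to a classification of maximal totally geodesic subvarieties of $\Ag$ (as products and sub-Hermitian-symmetric domains of smaller dimension) is asserted without proof --- totally geodesic subvarieties need not be Hodge loci, and no such classification is invoked correctly here.

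For comparison, the paper avoids the second fundamental form entirely and argues by induction on $g$: the base case $g=3$ follows from the Berndt--Olmos rank bound; for the inductive step one shows (Theorem \ref{pietro}, a boundary argument with the divisor class $a\la+\sum_i d_i\delta_i$ on $\Mb_g$) that the closure of $Y$ meets $\Delta_1^0$ over \emph{every} elliptic curve $[E]$, producing divisors $Y_E\subset\M_{g-1}$ with $\{[E]\}\times j(Y_E)\subset Z$; then Proposition \ref{oops} (that $h\meno(Z)$ is totally geodesic in $\A_{g-1}$, and that $Z_1=\phi_1(\A_1\times\A_{g-1})$ is a \emph{maximal} totally geodesic subvariety, proved by an irreducibility computation for a Lie triple system) forces $Z\supset Z_1$ and hence $Z=\Ag$. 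If you wish to pursue your differential-geometric route, note that the paper's remark after Theorem \ref{notg} shows how the Gaussian-map circle of ideas (via the bound $\dim\leq\frac{5}{2}(g-1)$ of Colombo--Frediani--Ghigi) does give an alternative proof for $g=4$, but only in combination with Berndt--Olmos, and it does not by itself handle higher genus.
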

In particular we have the following corollary:
\begin{cor} [See Corollary \ref{hlmg}]
  If $g \geq 3$, any Hodge locus of $\Mg$ has codimension at least 2.
\end{cor}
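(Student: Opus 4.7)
The plan is to argue by contradiction and reduce the statement directly to Theorem \ref{main3}. Suppose that some Hodge locus $H \subsetneq \Mg$ has codimension at most $1$, and let $Y$ be an irreducible component of $H$ of maximal dimension. Then $Y$ is an irreducible divisor of $\Mg$. By the definition of Hodge locus in $\Mg$ recalled in the introduction, $H = j\meno(Z)$ for some Hodge locus $Z \subsetneq \Ag$; in particular $j(Y) \subset Z$.

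The second ingredient is the point emphasised at the start of Section \ref{sec:hodge-moduli}: every Hodge locus of $\Ag$, when $\Ag$ is endowed with the Siegel metric and viewed as a locally symmetric orbifold, is a Mumford--Tate (equivalently, Shimura) subvariety, hence a proper totally geodesic subvariety of $\Ag$. Thus $Z$ is a proper totally geodesic subvariety of $\Ag$ containing the image $j(Y)$ of the irreducible divisor $Y \subset \Mg$.

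Since $g\geq 3$, this directly contradicts Theorem \ref{main3}, which forbids any proper totally geodesic subvariety of $\Ag$ from containing $j(Y)$ when $Y$ is an irreducible divisor of $\Mg$. Hence no Hodge locus of $\Mg$ can have codimension $1$, and the corollary follows. I do not expect any real obstacle here: the corollary is a formal consequence of Theorem \ref{main3}, which is tailored precisely to yield the rigidity statement needed. The only minor observations to record are that an irreducible component of a codimension-$1$ Hodge locus is still a divisor, and that the associated $Z \subsetneq \Ag$ remains proper (both being automatic from the definitions).
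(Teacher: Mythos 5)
Your argument is correct and is essentially the paper's own proof: the paper likewise deduces the corollary from Theorem \ref{notg} together with Mumford's theorem (recalled in \ref{sayorbag}) that Hodge loci of $\Ag$ are totally geodesic, noting that a codimension-one component of $j\meno(Z)$ would be an irreducible divisor of $\Mg$ whose image lies in the proper totally geodesic subvariety $Z$. Nothing further is needed.
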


The proof of Theorem \ref{main3} is based on a result of independent
interest (Theorem \ref{pietro}) that describes the behaviour of a
divisor in $\Mg$ at the boundary. This result is a variation on an
argument in \cite{marcucci-naranjo-pirola}.  It allows to use
induction on $g$.  The case $g=3$ follows, as a very special case,
from a theorem by Berndt and Olmos \cite{berndt-olmos} on the
codimension of totally geodesic submanifolds in symmetric spaces, see
\ref{bo}.  The inductive step depends on simple Lie theoretic
computations showing that $\sieg_k \times \sieg_{g-k} $ is a maximal
totally geodesic submanifold of $\sieg_g$, see Proposition \ref{oops}.

\medskip

{\bfseries \noindent{Acknowledgements}}.  The authors wish to thank
Professor Atsushi Ikeda for introducing them to paper \cite{schoen}.
The second author would like to thank Professor J.S. Milne for
interesting information on arithmetic varieties.

\section{Preliminaries and notation}
\label{notazione}

We start by recalling the main definitions related to Hodge theory
needed in the paper.
Let $H$ be a rational vector space of finite dimension.  Set
$ H_\R := H\otimes_\QQ \R$ and $ H_\C := H\otimes_\QQ \C$.
\begin{defin}
  A rational Hodge structure of weight $n$ is the datum of (1) a
  $\QQ$-vector space $H$ and (2) a decomposition
  $ H_\C := \bigoplus_{p+q=n} H^{p,q}, $ such that
  $H^{q,p} = \overline{H^{p,q}}$.
\end{defin}
Set
$ \des := \{ A=(a_{ij}) \in \Gl(2): a_{11} = a_{22} , a_{12} + a_{21}
= 0 \}$.
$\des$ is an algebraic group defined over $\QQ$.  The map
$ A \mapsto z:= a_{11}+ i a_{21}, $ is a group isomorphism
$\des(\R) \cong \C^*$.  The maps $ \phi_\pm : \des (\C) \lra \C^*$,
defined by $ \phi_\pm (A) : = a_{11} \pm i a_{21} $, are characters of
$\des(\C)$ and $f = (\phi_+, \phi_-)$ is an isomorphism of $\des(\C)$
onto $\C^*\times \C^*$.
It follows that every character of $\des(\C)$ is of the form
$\phi_+^p \cd \phi_-^q$.  If $\mathbb{G}_m$ denotes the multiplicative
group scheme, then $ w : \mathbb{G}_m \ra \des$, $ w(a) : = a I_2$
is an injective morphism defined over $\QQ$.
\begin{say}
  A $\QQ$-Hodge structure of weight $n$ is equivalent to the datum of
  a $\QQ$-vector space $H$ with a representation
  $\rho : \des(\R) \lra \Gl(H_\R)$ such that
  $\rho \circ w (a) = a^{n}$.  Indeed
  $\rho^\C : \des(\C) \lra \Gl(H_\C)$
  splits as a sum of eigenspaces $H^{p,q}$, on which the action is
  multiplication by the character $\phi_+^p \cd \phi_- ^q$.
  Since $\rho^\C(z, z) = \rho^\C \circ w^\C (z)$, it follows that
  $H^{p,q} \neq \{0\}$ only if $p+q = n$.
  It is easy to check that $\overline{H^{p,q}} = H^{q,p}$.  Thus we
  get a $\QQ$-Hodge structure.
\end{say}

%
\begin{defin}
  A \emph{polarization} on a Hodge structure $(H, H^{p,q})$ of weight
  $n$ is a bilinear form $Q: H \times H \ra \QQ$ with the following
  properties.
  \begin{enumerate}
  \item $Q(a, b) = (-1)^n Q(b,a)$.
  \item If $h: H_\C \times H_\C \ra \C$ is the Hermitian form
    $ h(a,b):= i^n Q(a, \bar{b})$,
    then $h(H^{p,q}, H^{p',q'})=0$ if $(p,q) \neq (p',q')$.
  \item The restriction of $h$ to $H^{p,q}$ is definite of sign
    $(-1)^q (-1)^{n (n-1)/2}$.
  \end{enumerate}
 
 \end{defin}
 If $H^*\otimes H^*$ is endowed with the induced Hodge structure,
 $Q\in H^*\otimes H^*$ is an element of type $(-n,-n)$.

 \begin{say}
   \label{saymt}
   Let $\rho : \des(\R) \ra \Gl(H_\R)$ be a Hodge structure of weight
   $n$.  The \emph{Mumford-Tate group} of $H$, denoted $\mt(H)$, is
   the smallest $\QQ$-algebraic subgroup of $\Gl(H)$ whose real points
   contain $\im \rho$.  The main property of the Mumford-Tate group is
   the following: given multi-indices $d, e \in \mathbb{N}^m$ consider
   \begin{gather*}
     T^{d,e} (H):= \oplus_{j=1}^m H^{\otimes d_j} \otimes
     (H^*)^{\otimes e_j}.
   \end{gather*}
   This space is a sum of pure Hodge structures.  A (rational) vector
   $v \in T^{d,e}(H)$ is invariant by the natural action of $\mt(H) $
   if and only if it is a Hodge class of type $(0,0) $.  See
   \cite{ggk,moonen-oort,schnell-trento,voisin-hodge-loci} for more
   details.
 \end{say}

 \begin{say}
   If $n$ is odd, a polarization $Q$ is a symplectic form on $H$, if
   $n$ is even it is a non-degenerate symmetric form. Let $\gsp(H) $ be
   the group of symplectic similitudes in case $n$ is odd and let
   $\go(H)$ be the group of orthogonal similitudes for $n$ even. They
   are algebraic subgroups defined over $\QQ$.  If $Q $ is a
   polarization for the Hodge structure $H$, then
   $\mt(H) \subset \gsp(H)$ for $n$ odd and $\mt(H) \subset \go(H)$
   for $n$ even.
 \end{say}

\begin{say}\label{ortosay}
  If $(H, H^{p,q})$ is a rational Hodge structure, a \emph{(rational)
    Hodge substructure} is a rational subspace $L \subset H$, such
  that
  \begin{gather*}
    L = \bigoplus L^{p,q}, \quad \text{with } L^{p,q} := L \cap
    H^{p,q}.
  \end{gather*}
  Since $L = \bar{L}$ it follows that $(L, L^{p,q})$ is a Hodge
  structure.  If $ Q$ is a polarisation on $(H,H^{p,q})$ and
  $L \subset H$ is a Hodge substructure, then
  \begin{gather*}
    L^\perp = \{a \in H: Q(a, b) = 0, \text{ for all } b\in L\},
  \end{gather*}
  is a Hodge substructure as well and $H = L \oplus L^\perp$.
We say that a Hodge substructure $L \subset H$ is \emph{proper} if $\{0\} \neq L \subsetneq H$.
\end{say}

\begin{say} \label{projector} Let $(H,H^{p,q}, Q)$ be a polarised
  Hodge structure. Let $p \in (\End H)^{0,0}$ be a projector
  (i.e. $p^2 = p$) defined over $\QQ$. Then $\ker p$ and $\im p$ are
  substructures and $H= \ker p \oplus \im p$ as a Hodge structure.
  Conversely, if $H ' \subset H$ is a Hodge substructure, the
  orthogonal projection $p: H_ \C \ra H_ \C$ onto $H'_\C$ is a morphism
  of Hodge structures, i.e. $p \in (\End H)^{0,0}$.
\end{say}

\begin{defin}
  If $H$ is a local system of $\QQ$-vector spaces on a complex
  manifold $B$, a \emph{subsystem} of $H$ is a subsheaf of
  $\QQ$-subspaces of $H$.
\end{defin}

\begin{defin}
  If $(B, H, F^\bul)$ is a variation of Hodge structure and
  $H' \subset H$ is a subsystem, we say that $H'$ is a
  \emph{subvariation} if for any $b\in B$ the module $H_b'$ is a Hodge
  substructure of $(H_b, F^\bul_b)$.
\end{defin}

If $(B,H, F^\bullet )$ is a variation of Hodge structure and $b\in B$,
we denote by $\Hi_b$ the subspace of $H_b$ containing the vectors
that are invariant by the monodromy action $\pi_1(B,b) \ra \Gl(H_b)$.
As $b$ varies in $B$, the modules $\Hi_b$ describe a subsystem $\Hi$
of $H$.  By construction every section of $H$ takes values in $\Hi$
and the evaluation map $\Gamma(H) \mapsto H_b$ is an isomorphism onto
$\Hi_b$.

We call the subsystem $\Hi \subset H$ the \emph{fixed part} of $H$.

\begin{teo}
  [Schmid]
  \label{schmid}
  Let $(B,H,F^\bul , Q) $ be a polarized variation of Hodge structure
  on a quasi-projective manifold $B$.  (1) Let $s$ be a global flat
  section of $H_\C$. Then the $(p,q)$-component of $s$ is also flat.
  (2) The fixed part $H^\inv$ is a subvariation of $H$.
\end{teo}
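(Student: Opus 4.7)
The plan is to deduce statement (2) from statement (1), then tackle (1) via the Hodge-theoretic norm and curvature argument.

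First I would observe that (1) implies (2) almost immediately. By definition the stalk $\Hi_b$ consists of monodromy-invariant vectors in $H_b$, equivalently germs at $b$ of global flat sections of $H$ (or of its pullback to the universal cover, descended to invariants). To show this is a sub-Hodge structure it suffices to check that each projection $H_b \to H^{p,q}_b$ sends $\Hi_b$ into itself. But given $v \in \Hi_b$, extend it to a global flat section $s$ of $H_\C$; by (1) the Hodge component $s^{p,q}$ is also flat, so its value $s^{p,q}(b)$ lies in $\Hi_b \otimes \C$. Taking real and imaginary parts (and arguing with the $\QQ$-structure via standard descent, since the Hodge projectors defined over $\C$ respect the real subspaces and the fixed part is defined over $\QQ$) shows that $\Hi$ is stable under the Hodge decomposition at each fiber, hence is a subvariation.

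For (1), the natural strategy is via the polarized Hodge metric. Let $h$ denote the Hermitian form $h(u,v) = i^n Q(u, \bar v)$ and let $h_+$ be the associated positive definite Hodge metric (obtained by flipping the sign of $h$ on each $H^{p,q}$ according to the polarization condition). For a flat section $s = \sum_{p+q=n} s^{p,q}$, the quantity $h(s,s)$ is constant on $B$ because $Q$ is flat and $s$ is flat. The key computation, using Griffiths transversality $\nabla F^p \subset F^{p-1} \otimes \Omega^1_B$ together with the curvature formula for the Hodge sub-bundles $F^p/F^{p+1}$, is to show that for each $k$ the partial sum
\begin{equation*}
  \phi_k := \sum_{p \geq k} \|s^{p, n-p}\|^2_{h_+}
\end{equation*}
is plurisubharmonic on $B$. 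Indeed, the $(1,0)$-part of $\nabla s^{p,q}$ lies in $F^{p-1}/F^p$ and is controlled by the Kodaira--Spencer map, and the flatness $\nabla s = 0$ forces the derivatives of consecutive Hodge components to cancel; a direct Bochner-type calculation then yields $\partial \bar\partial \phi_k \geq 0$.

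When $B$ is smooth projective, the maximum principle immediately forces each $\phi_k$ to be constant, whence inductively every $\|s^{p,q}\|^2_{h_+}$ is constant, and unraveling the cancellations shows $\nabla s^{p,q} = 0$. The main obstacle is the quasi-projective case: one must control the behaviour of $\phi_k$ near the boundary of a smooth compactification $\bar B \supset B$ with normal crossing boundary $D = \bar B \setminus B$. This is precisely where Schmid's asymptotic analysis enters: the nilpotent orbit theorem and the $\mathrm{SL}_2$-orbit theorem give estimates of the form $\|s\|^2_{h_+} = O((\log|t|)^N)$ near $D$, so the plurisubharmonic function $\phi_k$ has at worst logarithmic growth. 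Combined with the fact that $h(s,s)$ is globally bounded (indeed constant), one extends $\phi_k$ across $D$ as a plurisubharmonic function on $\bar B$, and then the maximum principle on the compact $\bar B$ finishes the argument exactly as in the projective case. The hard step is thus the boundary estimate — once it is in hand, the curvature/maximum principle argument and the deduction of (2) from (1) are formal.
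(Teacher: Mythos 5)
The paper itself does not prove part (1): it is quoted verbatim as Thm.~7.22 of Schmid's paper, and part (2) is disposed of with the one-line remark that complex conjugation sends flat sections to flat sections. Your deduction of (2) from (1) --- extend an invariant vector to a global flat section, apply (1) to see that each $(p,q)$-component is flat, hence again monodromy invariant, and use that the fixed part is defined over $\QQ$ and stable under conjugation so that $(\Hi_b)_\C=\bigoplus_{p,q}\bigl((\Hi_b)_\C\cap H^{p,q}_b\bigr)$ --- is exactly the intended argument, just written out.

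For (1) you are attempting to reconstruct Schmid's proof rather than cite it. The interior part of your sketch (Hodge metric, Griffiths transversality, a Bochner-type curvature computation, maximum principle when the base is compact) is the standard strategy, but two points are genuinely problematic as written. First, the plurisubharmonicity of the partial sums $\phi_k$ is asserted, not proved; the usual induction runs on the norm of the \emph{top} nonzero Hodge component, using the sign of the curvature of the Hodge bundles, and some such computation must actually be carried out. Second, and more seriously, the boundary step fails as stated: a plurisubharmonic function on a punctured disc with growth $O\bigl((\log(1/|t|))^N\bigr)$, $N\geq 1$, does \emph{not} in general extend plurisubharmonically across the puncture (one needs it bounded above, or at least $o(\log(1/|t|))$), and the constancy of the indefinite quantity $h(s,s)$ gives no bound on the positive Hodge norm. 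What makes Schmid's argument work is finer: a global flat section is monodromy invariant, hence (after reducing to unipotent monodromy) annihilated by $N=\log T$, hence lies in the non-positive half $W_n$ of the monodromy weight filtration, and the norm estimates from the ${\rm SL}_2$-orbit theorem then show its Hodge norm is \emph{bounded} near the boundary divisor; it is this boundedness that lets the maximum-principle argument go through. Since your analytic inputs are in any case Schmid's nilpotent and ${\rm SL}_2$-orbit theorems, the sketch buys little over the paper's citation, and in its present form it would not close up into a complete proof without repairing the boundary estimate.
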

The first statement is Thm. 7.22 in \cite {schmid-sing}. The second
statement follows since complex conjugation sends flat sections to
flat sections.

\begin{teo}
  [Deligne]
  \label{deligne}
  If $(B,H, F^\bul,Q)$ is a polarized variation of Hodge structure on
  a quasi-projective manifold $B$ and $G \subset H_\C$ is a subsystem of
  dimension $d$, then there is $m>0$ such that
  $(\est ^d G)^{\otimes m} = \C_B$.
\end {teo}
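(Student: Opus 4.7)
The plan is to pass to the polarized VHS on $\est^d H$, apply Deligne's semisimplicity theorem to the rank-$1$ subsystem $\est^d G$, and then combine the resulting unitarity of its monodromy character with the arithmetic structure inherited from $H$.

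To begin, $\est^d H$ carries a natural polarized VHS induced from $(H,F^\bul,Q)$, with polarization $Q^{(d)}$ coming from $Q$. The rank-$1$ subsystem $\est^d G \subset \est^d H_\C$ has monodromy given by a single character $\chi \colon \pi_1(B) \to \C^*$, and the conclusion $(\est^d G)^{\otimes m} = \C_B$ is equivalent to the finiteness of the image of $\chi$.

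The next step is to establish unitarity of $\chi$. By Deligne's semisimplicity theorem for polarized variations of Hodge structure, the local system $\est^d H_\C$ is semisimple under monodromy, so $\est^d G$ sits inside a single isotypical component. The polarization $Q^{(d)}$ induces a non-degenerate, monodromy-invariant Hermitian form on $\est^d H_\C$ (after the standard sign adjustment on each $(p,q)$-piece). Since distinct isotypical components are either mutually orthogonal or dually paired, the restriction of this form to $\est^d G \oplus \overline{\est^d G}$ is non-degenerate, producing a flat Hermitian metric on the line bundle $\est^d G$. Hence $|\chi(\gamma)| = 1$ for every $\gamma \in \pi_1(B)$.

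Finally, I would exploit the $\QQ$-structure: the monodromy group preserves $Q$ and $H_\QQ$, so its Zariski closure $\bar\Gamma \subset \gsp(H)$ or $\go(H)$ is a $\QQ$-algebraic subgroup. The line $\est^d G$, together with its Galois conjugates inside $\est^d H_\C$, spans a $\bar\QQ$-defined $\bar\Gamma$-subrepresentation, so $\chi$ takes values in $\bar\QQ^*$. As $B$ is quasi-projective, $\pi_1(B)$ is finitely generated, whence $\chi(\pi_1(B))$ is a finitely generated subgroup of $\bar\QQ^* \cap S^1$. Kronecker's theorem then forces every such element to be a root of unity, and a finitely generated torsion subgroup of $\C^*$ is finite, yielding the desired $m$. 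The main obstacle is the passage from $G$ being a purely complex subsystem to an algebraic statement about the $\QQ$-group $\bar\Gamma$; this is handled by taking the Galois closure of $\est^d G$ inside $\est^d H_\C$, producing a $\QQ$-defined subrepresentation containing $\est^d G$ on which $\bar\Gamma$ acts algebraically, so that $\chi$ is realized as a $\bar\QQ$-algebraic character of $\bar\Gamma$.
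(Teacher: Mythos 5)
Your overall strategy --- reduce to the rank-one subsystem $\est^d G$, prove its monodromy character $\chi$ unitary via semisimplicity and the polarization, then use the arithmetic structure to force finiteness --- is essentially the shape of Deligne's own argument. The paper does not reprove the statement at all: it cites Cor.~4.2.8~(iii)~b) of Th\'eorie de Hodge II and only checks that the one nontrivial axiom used there, (4.2.2.4), is supplied by Schmid's Theorem~\ref{schmid}. Your first two steps are acceptable in spirit, although the unitarity argument is cleaner if you invoke the fact that any direct-sum decomposition of the local system underlying a polarizable VHS is a decomposition into sub-VHS, so that $\est^d G$ is a rank-one complex VHS, pure of one type, on which the flat form $h$ is definite; your ``isotypical components are orthogonal or dually paired'' reasoning does not by itself exclude that $\est^d G$ is $h$-isotropic and paired with a third subsystem.

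The genuine gap is in the last step. Kronecker's theorem applies to algebraic \emph{integers} all of whose conjugates lie on the unit circle; it is false for algebraic numbers of modulus one, and a finitely generated subgroup of $\overline{\QQ}^{\,*}\cap S^1$ need not be finite: $(3+4i)/5$ and its conjugate $(3-4i)/5$ both have modulus one, yet $(3+4i)/5$ has infinite multiplicative order. Concretely, the rank-two rational local system on $\C^*$ whose monodromy generator is the rotation matrix with eigenvalues $(3\pm 4i)/5$ underlies a weight-zero polarized rational VHS, and its eigenline is a rank-one subsystem whose character satisfies everything you establish (unitary, values in $\overline{\QQ}$, finitely generated $\pi_1$) but has infinite image. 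So no argument using only the $\QQ$-structure can close. The missing ingredient is integrality, which is built into Deligne's conventions (his Hodge structures carry a lattice $H_\Zeta$) and is available in the geometric situation where the paper applies the theorem, since $H$ comes from $R^nf_*\Zeta$: with $\rho(\gamma)$ integral, $\chi(\gamma)$ is an eigenvalue of the integer matrix $\est^d\rho(\gamma)$, hence an algebraic integer. You also need that \emph{all} conjugates of $\chi(\gamma)$ have modulus one, not just $\chi(\gamma)$ itself: this follows because any $\sigma\in\Aut(\C/\QQ)$ carries $\est^d G$ to another rank-one subsystem of $\est^d H_\C$, whose character $\sigma\circ\chi$ is again unitary --- your ``Galois closure'' remark gestures at this but does not prove it. Granting both points, Kronecker gives that each $\chi(\gamma)$ is a root of unity of degree at most $\rank \est^d H$, hence of bounded order, and finite generation of $\pi_1(B)$ then yields the required $m$.
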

This is proved in \cite{deligne-2}, Cor. 4.2.8 (iii), b). The proof
uses the fact that the category of variations of Hodge structures on a
quasi-projective manifold satisfies the properties (4.2.2.1) -
(4.2.2.4). The only non-trivial point is property (4.2.2.4) which is
exactly Schmid Theorem \ref{schmid}.  See \cite[p. 45,
note]{deligne-2}.

\begin{say}
  We say that a variation of Hodge structure $(B,H,F^\bullet)$
  is \emph{trivial} if both $H$
  and $F^\bullet$
  are product bundles. We say that it is \emph{isotrivial} if there is
  a finite branched cover $f:B'\ra
  B$ such that $(B', f^*H, f^*F^\bullet)$ is trivial.
\end{say}

\begin{say}\label{saylocus}
  Given a variation of Hodge structure $(B,H,F^\bullet)$ and
  multi-indices $d, e\in \mathbb{N}^m$, passing to the universal cover
  $p: \tilde{B} \ra B$ we have
  $p^*(T^{d,e}H) = \tilde{B} \times T^{d,e} H_0$ for a fixed
  $\QQ$-vector space $H_0$. A vector $t \in T^{d,e}H_0$ gives a
  section $\tilde{t}$ of $p^*(T^{d,e}H)$.  Consider the locus
  $Y(t):=\{\tilde{b}\in \tilde{B}: \tilde{t}(\tilde{b}) \in
  (p^*T^{d,e}H)^{0,0} \}$.
  It is a countable union of irreducible analytic sets in $\tilde{B}$.
  A \emph{Hodge locus} is by definition an irreducible analytic subset
  $Z \subset B$ such that there exist 
  $t_i \in T^{d_i, e_i} H_0$ for $i=1, \lds, k$, such that $Z$ is an
  irreducible component of
  $p ( Y(t_1) \cap \cds \cap Y(t_k) ) \subset B$.  See \cite
  {moonen-oort,voisin-hodge-loci} for more details.

\end{say}

\begin{defin}
  The Hodge loci for the natural variation of Hodge structure on $\Ag$
  are called \emph{special subvarieties} or \emph{Shimura subvarieties}.
\end{defin}

\begin{say}
  \label{sayspsp}
  We recall some elementary facts from representation theory that are
  needed in Section \ref{sec:fujita}.  Let $ W$ be a complex vector space of
  dimension $2m$ with a complex symplectic form $\om$.  The symplectic
  form induces an isomorphism $W \cong W^*$.  Let $\omega^\flat $ be
  the element of $ \est^2 W$ corresponding to $\om \in \est^2 W^*$.
  For $2\leq p \leq m$ set
  \begin{gather*}
    \phi_p : \est^p W \ra \est^{p-2} W, \quad \phi_p (s): = \om
    \lrcorner s,
  \end{gather*}
  and $\phi_1 \equiv 0$.  For $p\geq 2$ the morphism $\phi_p$ is
  surjective and $E_p(W) : = \ker \phi_p$ is an irreducible
  representation of $\Sp(W)$.  Setting $L: \est^iW \ra \est^{i+2}W$,
  $L(s) : = \om^\flat \wedge s$, we have
  \begin{gather}
    \label{pseudolepeschetz}
    \est^p W = E_p(W) \oplus L(\est^{p-2}W)
  \end{gather}
  as $\Sp(W)$-modules.  It is clear that the group $\gsp(W)$ also
  preserves $E_p(W)$, hence we get an irreducible representation
  \begin{gather*}
    \eps_p : \gsp(W) \ra \Gl(E_p(W)).
  \end{gather*}
  See e.g. \cite[p. 14]{sato-kimura}, \cite[p. 260]{fulton-harris}
  \cite [p. 201]{bourbakotto} for the proof and more details.
\end{say}

\begin{say}
\label{grunfigrunfi}
  Let now $(H,Q)$ be a polarized rational Hodge structure of weight
  $n$. Assume that $n$ is odd and positive. For $p \leq \dim H/2$,
  $\est^p H$ is a Hodge structure and $E_p(H)$ is a substructure.  The
  same holds for $L(\est^{p-2}H)$.  Indeed if
  $\rho : \deli \ra \gsp(H_\C) $ is the representation defining the
  Hodge structure on $H$, then $\eps_p \circ \rho$, which is a summand
  of $\est^p \rho$, defines $E_p(H)$ as a substucture of $\est^p H$.
  We notice that if $n=1$ and $H$ corresponds to an abelian variety
  $A$, then $E_p(H) = H^p(A)\prim$.
\end{say}

\begin{say}\label{sayzero}
  In the setting of \ref{grunfigrunfi} we have
  \begin{gather*}
    (\est^ p H)^{np,0} \subset E_p(H).
  \end{gather*}
  Indeed given $\alfa \in (\est^ p H)^{np,0}$, write
  $\alfa = \beta + \gamma$, with $\beta \in E_p(H)$ and
  $\ga \in L(\est^{p-2}H)$.  Taking the $(np,0)$-components we get
  $\alfa = \beta^{np,0} + \gamma^{np,0}$ with
  $ \beta^{np,0} \in E_p(H)$ and  $ \gamma^{np,0} \in L(\est^{p-2} H)$,
  since both $E_p(H)$ and $L(\est^{p-2}H)$ are substructures.  The
  operator $L $ is the wedge with $Q^\flat$.  Since
  $Q \in (H^*\otimes H^*)^{-n,-n}$,
  $Q^\flat \in (H \otimes H)^{n,n}$.  Hence
  $(L(\est^{p-2}H))^{np,0} = \{0\}$.  Therefore
  $\alfa = \beta^{np,0}$.
\end{say}

\begin{say}
  \label{oo}
  Let $W$ be a complex vector space of dimension $m$ with a
  non-degenerate symmetric bilinear form $q$.  We recall that if $m$
  is odd then $\est^pW$ is an irreducible representation of $O(W,q)$
  for any $p$.  If $m=2s$, then $\est^pW$ is an irreducible
  representation of $O(W,q)$ for any $p \neq s$.  See \cite[p. 287 and
  p. 295]{fulton-harris} or \cite[p. 16 and p. 18]{sato-kimura}.  It
  follows that if $(H,Q)$ is a polarised rational Hodge structure of
  weight $n$, with $n$ even, then $\est^p H$ is an irreducible
  representation of $\go(H)$ for any $p\neq \dim H /2$.
\end{say}

\section{Fujita decomposition and Hodge loci}
\label{sec:fujita}

\begin{say}
  \label{mainsay}
  Let $\barx$ be a complex projective manifold of dimension $n+1$ and
  let $\barf: \barx \lra \barb $ be a fibration onto a smooth
  projective curve $\barB$.  Denote by $B$ the set of regular values
  of $\barf$, set $X:=\barf\meno (B)$ and $f:=\barf\restr{X}$.

  Fix a Hodge class $[\om] \in H^2(\barx, \Zeta)$.  Consider the
  fibrewise primitive cohomology with respect to the restriction of
  this polarisation to the smooth fibers:
  \begin{gather*}
    \prima := (R^n f_*\QQ_X)\prim.
  \end{gather*}
  Then $\prima$ is a local system of $\QQ$-vector spaces on $B$.  Its
  associated vector bundle $H\otimes \OO_B$ is endowed with the
  Gauss-Manin connection $\nabla$ and with a polarization $Q$ obtained
  from the intersection form.  Denote by $F^\bullet$ the weight
  filtration.  Then $(B,H, F^\bullet, Q)$ is a polarised variation of
  Hodge structure on the quasiprojective curve $B$.  Let $h$ denote
  the associated Hermitan form:
  $h (\alfa, \beta) = i^n Q(\alfa, \bar{\beta} )$.  The sheaf
  \begin{gather*}
    V:= \barf_* \om_{\barx /\barb}
  \end{gather*}
  is locally free on $\barb$, so we identify it with the corresponding
  vector bundle $V\ra \barb$, which is called the \emph{Hodge bundle}
  of the fibration.  We have $V\restr{B} = F^n$. The restriction of
  $h$ to $\vr$ is positive definite, hence we can define the
  orthogonal projection $p : H_\C \ra \vr$. The Gauss-Manin connection
  induces a connection $D:=p\nabla $ on $\vr$.
\end{say}

\begin{teo}
  [Fujita] In the setting of \ref{mainsay} there
  is a decomposition
  \begin{gather}
    \label{Fujita}
    V=A\oplus U,
  \end{gather}
  where $A$ in an ample vector bundle on $\barB$ and $U$ is a vector
  bundle on $\barB$, such that $U\restr{B}$ is the holomorphic vector
  bundle associated to a subsystem of $H$.  We will call
  \eqref{Fujita} the \emph{(second) Fujita decomposition}.
\end{teo}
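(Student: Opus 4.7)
The plan is to derive the decomposition in three stages: first Fujita's classical semi-positivity theorem, then a curvature argument using the Hodge metric $h$ to identify the flat summand $U|_B$, and finally an application of Schmid's theorem to recognize $U|_B$ as the holomorphic bundle associated to a subsystem of $H$.

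First, I would establish that $V=\bar f_*\om_{\bar X/\bar B}$ is nef on $\bar B$. This is Fujita's semi-positivity theorem, obtained from the positivity of the relative dualizing sheaf $\om_{\bar X/\bar B}$ via a relative vanishing or $L^2$-extension argument: every quotient line bundle of every symmetric power of $V$ has non-negative degree. Consequently the Harder-Narasimhan filtration of $V$ has only non-negative slopes, so the maximal subsheaf $U\subset V$ whose successive HN-quotients have slope zero is well-defined, and the quotient $A:=V/U$ has strictly positive slopes on each of its HN pieces.

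Second, I would analyse $U|_B$ via the Hodge metric. The polarization gives the positive definite Hermitian form $h(\alpha,\beta)=i^n Q(\alpha,\bar\beta)$ on $V|_B=F^n|_B$, and $V|_B$ is obtained from the flat bundle $H_\C\otimes\OO_B$ by the orthogonal projection $p$. Griffiths' curvature formula then expresses $\Theta_{V|_B,h}$ through the second fundamental form of $p$, which coincides (up to sign) with the IVHS map $T_B\otimes F^n\to F^{n-1}/F^n$. Standard Chern-form inequalities for nef bundles force the second fundamental form to vanish along the degree-zero subbundle $U|_B$; hence $U|_B$ is parallel inside $H_\C\otimes\OO_B$ with respect to the Gauss-Manin connection, and the restriction of $h$ to $U|_B$ is flat, i.e.\ unitary. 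By Theorem \ref{schmid}(1) the $(p,q)$-components of its local flat sections are themselves flat, so those flat sections, together with their complex conjugates, descend to a rational local subsystem $G\subset H$ whose associated holomorphic bundle $G\otimes_\QQ\OO_B$ is exactly $U|_B$; the complementary orthogonal subbundle $A|_B$ carries a metric whose Griffiths curvature is strictly positive wherever the IVHS map does not vanish.

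Third, I would extend the decomposition across the punctures of $\bar B\setminus B$ and check ampleness of $A$. Since the monodromy of $U|_B$ is unitary, Schmid's nilpotent orbit theorem guarantees that its Deligne canonical extension is a holomorphic vector bundle $U$ on $\bar B$ of degree zero, and the $h$-orthogonal complement extends to a complementary subbundle $A\subset V$; the positive Griffiths curvature on $A|_B$ combined with Fujita nefness on $\bar B$ yields ampleness of $A$. The main obstacle is the curvature step, namely proving rigorously that a degree-zero subbundle of the nef Hodge bundle $V$ must be flat unitary with respect to the Hodge metric. This is precisely what upgrades the original Fujita decomposition (trivial flat summand) to the ``second'' Fujita decomposition (unitary flat summand), and it depends in an essential way on Griffiths' curvature formula, on semi-positivity inequalities for nef bundles, and on Schmid's theorem to translate the flat unitary structure back into a subsystem of $H$.
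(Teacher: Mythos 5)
The paper never proves this statement: it is quoted, with \cite{fujita-fiber,kollar,cadett-1,cadett-2} cited for the proof, so your sketch can only be measured against those arguments. Your broad strategy (Fujita semi-positivity, then a curvature computation with the Hodge metric, then extension across the punctures) is indeed the strategy of Fujita and Catanese--Dettweiler, but it contains a genuine error and leaves the hard step unproved. The error is the rationality claim. What is true, and what the paper actually uses (see the proof of Theorem \ref{main}: ``there is a subsystem $G\subset H_\C$ such that $U\restr{B}$ is the vector bundle associated to $G$''), is that $U\restr{B}$ comes from a \emph{complex} local subsystem of $H_\C$ with unitary monodromy. It is in general \emph{not} defined over $\QQ$: in the paper's own examples (Section \ref{sec:examples}) $U_t$ is a single eigenspace ${\mathbb H}_4$, resp. ${\mathbb H}_5\oplus{\mathbb H}_6$, defined only over $\QQ(\zeta_5)$, resp. $\QQ(\zeta_7)$, and one must take the whole Galois orbit to obtain anything rational. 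Your device of adjoining complex conjugates does not repair this: the rational subsystem generated by the flat sections of $U\restr{B}$ and their conjugates has associated bundle containing $U\restr{B}\oplus\overline{U\restr{B}}$, hence is strictly larger than $U\restr{B}$ whenever $U\neq 0$ (note $U\restr{B}\subset F^n$ while its conjugate sits in the opposite Hodge type). The appeal to Theorem \ref{schmid}(1) is also misplaced: that statement concerns \emph{global} flat sections, whereas here the monodromy on $U\restr{B}$ is only unitary, not trivial; but nothing of the sort is needed, since once $U\restr{B}$ is a $\nabla$-stable holomorphic subbundle of $H_\C\otimes\OO_B$ its local $\nabla$-flat sections define the desired subsystem of $H_\C$ directly.

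The second problem is that the pivotal step --- ``a degree-zero piece of the nef bundle $V$ has vanishing second fundamental form'' --- is asserted via ``standard Chern-form inequalities'', but these are not available as stated: the Hodge metric exists only on the punctured curve $B$, and $\deg_{\barB} U=0$ is \emph{not} the integral over $B$ of the curvature form of $(U\restr{B},h)$. Relating the two requires the asymptotic behaviour of the Hodge metric at the punctures (Schmid's nilpotent orbit/norm estimates, or Fujita's local computation with the canonical extension); this is exactly where the real content of the theorem lies, and it is what \cite{fujita-fiber,kollar,cadett-2} supply. Two smaller slips: the slope-zero pieces of the Harder--Narasimhan filtration of a nef bundle are quotients, not subsheaves, and a degree-zero subsheaf of a nef bundle need not split off, so $U$ cannot simply be ``defined'' that way; and ampleness of $A$ is obtained from Hartshorne's criterion on the curve $\barB$ (every quotient of $A$ has positive degree, since a degree-zero quotient would again be flat and would enlarge $U$), not from pointwise positivity of the curvature of $A\restr{B}$, which may degenerate where the IVHS vanishes.
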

See \cite{fujita-fiber,fujita-bis} for background and
\cite{kollar,cadett-1,cadett-2} for the proof. See also
\cite{cadett-3,pirola-torelli,vls} for related problems.

\begin{say}
  \label{spezzocon}
  On $B$ we have $D( U ) \subset U \otimes \Omega^1_B$ and
  $D( A) \subset A \otimes \Omega^1_B$, so $D=D^A \oplus D^U$.  Indeed
  let $\sigma_U :U \ra A \otimes \est^{1,0} $ and
  $\sigma _A : A \ra U \otimes \est^{1,0}$ be the second fundamental
  forms of $ U \subset V$ and $A \subset V$.  By
  \cite[p. 20]{kobayashi-vector} $\sigma_A$ is (up to sign) the
  adjoint of $\sigma _U$. Moreover $\sigma_U=0$ since $U$ is preserved
  by $\nabla$, hence by $D$.
\end{say}

The following observation is well-known.
\begin{lemma}
  \label{lemma7}
  If $(V\restr{B}, D)$ is flat, then $\vr$ is preserved by $\nabla$.
\end{lemma}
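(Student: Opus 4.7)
The plan is to realize the obstruction to $\nabla$-invariance of $\vr$ as the second fundamental form $\sigma = (1-p)\nabla : \vr \to (\vr)^{\perp} \otimes \Omega^1_B$, and then to force $\sigma = 0$ from the vanishing of $R^D$ via an indefinite Hermitian computation. First, I would decompose $\nabla$ in block form with respect to the smooth orthogonal splitting $H \otimes \OO_B = \vr \oplus (\vr)^{\perp}$: the diagonal blocks are $D = p\nabla$ on $\vr$ and $D^{\perp} = (1-p)\nabla$ on $(\vr)^{\perp}$, and the off-diagonal blocks are $\sigma$ together with $\sigma' = p\nabla$ on $(\vr)^{\perp}$. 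Expanding $\nabla^2 = 0$ and isolating the $\vr$-to-$\vr$ component yields the standard identity $R^D + \sigma' \circ \sigma = 0$.

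Second, since $\nabla$ preserves $h$, differentiating the orthogonality $h(s, v) = 0$ for $s \in \vr$ and $v \in (\vr)^{\perp}$ gives $h(\sigma s, v) + h(s, \sigma' v) = 0$, so $\sigma' = -\sigma^*$, where the adjoint is taken with respect to $h$. Combined with the previous step, the flatness hypothesis $R^D = 0$ becomes $\sigma^* \circ \sigma = 0$.

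Third, by Griffiths transversality the image of $\sigma$ lies in $H^{n-1,1} \otimes \Omega^{1,0}_B$, a Hodge piece on which $h$ is definite with sign opposite to its sign on $H^{n,0} = \vr$. Pairing $\sigma^* \sigma s = 0$ against $s$ via $h$ then yields $h(\sigma s, \sigma s) = 0$ as a $(1,1)$-form, and definiteness of $h$ on $H^{n-1,1}$ forces $\sigma s = 0$ for every $s$. Thus $\sigma = 0$, which is exactly the statement that $\nabla$ preserves $\vr$.

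The main subtlety is the sign bookkeeping: because $h$ is indefinite on $H$, the adjoint $\sigma^*$ must be formed with respect to $h$ itself and not the positive Hodge metric, and the final vanishing works precisely because the polarization assigns opposite signs to $h$ on $H^{n,0}$ and $H^{n-1,1}$.
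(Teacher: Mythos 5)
Your proposal is correct and follows essentially the same route as the paper: the paper simply cites the curvature identity $i\langle R_V s,s\rangle = i\langle R_{H_\C}s,s\rangle - i\langle \sigma s,\sigma s\rangle$ from Arbarello--Cornalba--Griffiths, whereas you rederive it from $\nabla^2=0$ and the $h$-adjointness of the two second fundamental forms, and then both arguments conclude via Griffiths transversality and the definiteness of $h$ on $H^{n-1,1}$. The only small quibble is your closing remark: the conclusion $\sigma s=0$ uses only that $h$ is definite on $H^{n-1,1}$ (since one has exact vanishing of $h(\sigma s,\sigma s)$), not that its sign is opposite to that on $H^{n,0}$.
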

\begin{proof}
  We use the notation of
  \cite[p. 224]{arbarello-cornalba-griffiths-2}.  Let $s$ be a local
  holomorphic section of $\vr = F^n \subset H_\C$.  Then
  $ i \sx R_Vs,s \xs = i \sx R_{H_\C} s, s\xs -i \sx \sigma s, \sigma
  s\xs$,
  where $\sigma$ is the second fundamental form of $\vr \subset H_\C$.
  By assumption $R_V = 0$. Also $R_{H_\C}=0$. So
  $\sx \sigma s , \sigma s \xs =0$.  By Griffiths transversality
  $\nabla (\vr) = \nabla (F^n) \subset F^{n-1} \otimes \Omega^1_B$.
  Since $F^{n-1} = F^n \oplus H^{n-1,1}$, we have
  $\sigma : \vr \ra H^{n-1,1} \otimes \est^{1,0}(B)$.  But $h$ is
  definite on $H^{n-1,1}$, so we conclude that $\sigma s
  =0$. Therefore $\sigma =0$ and $\nabla$ preserves $\vr$.
\end{proof}

Our first result is the following.

\begin{teo}
  \label{main}
  Let $\barf:\barX \ra \barB$ be as in \ref{mainsay} and let
  $d:= \rang U >0$.  Then there is an \'etale cyclic cover
  $u:\hB\ra B$ such that $E_d(u^*H)^\inv \neq \{0\}$ if $n$ is odd and
  $(\est^d u^*H )^\inv \neq \{0\}$ if $n$ is even.
\end{teo}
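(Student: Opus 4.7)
\smallskip

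\noindent\textbf{Proof plan.} The plan is to apply Deligne's theorem to the subsystem of $H_\C$ corresponding to the flat summand $U$ of the Fujita decomposition, so as to trivialise its top exterior power on a finite étale cover, and then to observe that the resulting monodromy invariant vector is automatically of pure Hodge type $(nd,0)$. In odd weight this already forces it into $E_d(H)$ by the elementary remark \ref{sayzero}.

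\smallskip

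By the very definition of the Fujita decomposition, $U\restr{B}$ is the holomorphic vector bundle associated with a subsystem $G \subset H_\C$ of rank $d$. Applying Deligne's Theorem~\ref{deligne} to $G$ produces $m>0$ with $(\est^d G)^{\otimes m} \cong \C_B$. Consequently the monodromy character of the rank one local system $\est^d G$ factors through a finite cyclic group of order dividing $m$. Let $u:\hB\ra B$ be the étale cyclic cover associated to the kernel of this character: on $\hB$ the pullback $\est^d u^*G$ is the trivial local system, and hence admits a nowhere vanishing flat section $s$. Viewed inside $\est^d u^*H_\C$, this $s$ is a nonzero monodromy invariant vector.

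\smallskip

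Next, we locate $s$ in the Hodge decomposition. Since $G\otimes_\C\OO_B = U\restr{B} \subset V\restr{B} = F^n$, taking $d$-th exterior powers and using the formula for the induced Hodge filtration gives
\begin{equation*}
  \est^d G \otimes \OO_{\hB} \; \subset \; \est^d F^n u^*H \; = \; F^{nd}(\est^d u^*H) \; = \; (\est^d u^*H)^{nd,0}.
\end{equation*}
Hence the flat section $s$ is pointwise of pure type $(nd,0)$. If $n$ is even, then $s$ already produces the required nonzero element of $(\est^d u^*H)^\inv$. If $n$ is odd, the representation theoretic fact \ref{sayzero} gives $(\est^d H_b)^{nd,0} \subset E_d(H_b)$ at each point; since $E_d$ is cut out by the flat symplectic polarization it commutes with pullback, so $s$ is in fact a nonzero element of $E_d(u^*H)^\inv$.

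\smallskip

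The central input, and the only nontrivial one, is Deligne's theorem: its torsion conclusion $(\est^d G)^{\otimes m} \cong \C_B$ is what permits reduction to a trivial local system after a finite étale cover. The unitarity of $U$ alone, given by Fujita, would only provide a flat unitary line bundle $\est^d G$ without any information on the order of its monodromy and would not suffice. The rest of the argument is a routine bookkeeping using the Fujita decomposition, Griffiths transversality and \ref{sayzero}.
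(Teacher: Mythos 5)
Your proposal is correct and follows essentially the same route as the paper: apply Deligne's theorem to the subsystem $G$ underlying $U$ to make $\est^d G$ torsion, pass to the finite cyclic cover killing its monodromy character to obtain a flat invariant section, and use $U\restr{B}\subset F^n$ together with \ref{sayzero} to place it in $E_d(u^*H)^\inv$ when $n$ is odd. The only differences are cosmetic (you spell out the $(nd,0)$-typing via the filtration, which the paper leaves implicit).
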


\begin{proof}
  By assumption there is a subsystem $G \subset H_\C$ such that
  $U\restr{B}$ is the vector bundle associated to $G$.  By Theorem
  \ref{deligne} there is an $m>0$ such that
  $(\est^d G ) ^{\otimes m} \cong \C_B$.  Fix $b\in B$ and let
  $\rho : \pi_1(B, b) \ra \C^* = \Gl(\est^d G_b)$ be the monodromy
  representation of $\est^d G$. The image of $\rho$ is finite cyclic.
  Let $u:\hat{B} \ra B$ be the unramified covering associated to
  $\ker \rho \subset \pi_1(B,b)$. It is a cyclic Galois cover.  Then
  $\hat{G} : = u^* G$ is a subsystem of $u^*H $.  Fix
  $\hat{b} \in u\meno (b)$.  The monodromy representation of
  $\est^d \hat{G} =u^*\est^d G$ at $\hat{b}$ is the composition
  $\rho \circ u_* : \pi_1(\hB, \hat{b}) \ra \C^*$. Hence
  $\est^d \hat{G} \cong \C_{\hB}$.  It follows that $\est^d \hat{G}$
  is contained in the fixed part of $\est^d u^* H$.  If $n$ is even we
  are done.  If $n$ is odd, observe that
  $\est^d \hat{G} \subset (\est^d u^*H)^{nd,0} \subset E_d(u^* H)$ by
  \ref{sayzero}.  Hence $ (E_d(u^* H) )^\inv \neq \{0\}$.
\end{proof}

\begin{remark}
  It is important to stress that the variation $u^*\est^d H$ is
  \emph{geometric} and that this yields another proof of the Theorem,
  which avoids Schmid theorem.  To see this, we work over $\barB$
  instead of $B$. Up to base change we can assume that $\det U$ is
  trivial.  Set $Z:= X \times_{\barb} \cdots \times_{\barb}{X} $ ($d$
  times) and let $F: Z \ra \barB$ be the induced fibration:
  $F(x_1, \lds, x_d ) = f(x_i)$.  Over $B$ the morphism $F$ is smooth.
  Denoting by $p_i$ the $i$-th projection, the form
  $\tilde{\om} : = \sum_{i=1}^ d p_i^* \om$ is a K\"ahler form on
  $X \times \cds \times X$.  Let $R^{nd}F_*\QQ_Z\prim$ denote the
  fibrewise primitive cohomology with respect to $[\tilde{\om}]$,
  which is a geometric variation of Hodge structure on $B$.  For any
  $t\in B$, we have
  $ \est^d H ^n (X_t)\prim \subset H^{nd} (Z_t)\prim$.  Thus
  $\est^d H = \est ^d R^1f_*\QQ_X\prim$ is a summand of the geometric
  variation $R^{nd}F_*\QQ_Z\prim$ as claimed.  Next set
  \begin{gather*}
    W_t : = H^{nd}(Z_t)^\inv \cap \est^d H^n (X_t)\prim,
  \end{gather*}
  where $H^{nd}(Z_t, \QQ)^\inv $ denotes the fixed part of the
  variation $R^{nd}F_*\QQ_Z\prim$.  We can deduce that this fixed part
  is a Hodge substructure, from the Global Invariant Cycle Theorem,
  which asserts that it coincides with the image of the restriction
  $H^{nd}(Z) \ra H^{nd}(Z_t)$, see e.g. \cite[Thm. 16.24,
  p. 385]{voisin-libro}.  Thus $W_t$ is a Hodge substructure and
  $\det U \subset W$, which implies that $W$ is non-trivial.

\end{remark}

\begin{say}
  To make some of the following statements simpler we introduce the
  following notation: if $H$ is a polarized rational Hodge structure
  or a polarized rational variation of Hodge structure we set
  \begin{gather*}
    K_d(H):=
    \begin{cases}
      E_d(H) & \text{if  $n$ is odd,}\\
      \est^d H & \text{if $n$ is even.}
    \end{cases}
  \end{gather*}
\end{say}

\begin{cor}
  \label{bau}
  Let $\barf:\barX \ra \barB$ be as in \ref{mainsay} and let
  $d:= \rang U >0$.  Then either $K_d (H)$ is isotrivial or for any
  $b\in B$ the Hodge structure $K_d(H_b) $ admits a proper
  substructure.
\end{cor}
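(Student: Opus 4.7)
The plan is to combine Theorem \ref{main} with the second part of Schmid's Theorem \ref{schmid} to produce a nontrivial subvariation of Hodge structure inside $K_d(H)$ after a finite étale base change, and then to distinguish two cases according to whether this subvariation is proper.

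First I would invoke Theorem \ref{main} to obtain the étale cyclic cover $u:\hat B\to B$ and set $M:=K_d(u^*H)^{\inv}$, which is a nonzero local subsystem of $K_d(u^*H)=u^*K_d(H)$. By the second part of Schmid's Theorem \ref{schmid}, $M$ is in fact a subvariation of Hodge structure of $u^*K_d(H)$.

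Next I would distinguish two cases. If $M=K_d(u^*H)$ as local systems, then by the very definition of the fixed part the monodromy of $K_d(u^*H)$ is trivial, so its underlying local system is constant on $\hat B$. Applying the first part of Schmid's Theorem \ref{schmid} to the global flat sections shows that their $(p,q)$-components are also flat, so the Hodge filtration of $K_d(u^*H)$ is constant as well; hence $u^*K_d(H)$ is trivial as a variation of Hodge structure, which is exactly the statement that $K_d(H)$ is isotrivial (with $u$ playing the role of the required finite branched cover). If instead $M$ is a proper subvariation of $K_d(u^*H)$, then since $u$ is a finite étale cover, it is surjective, and for any $b\in B$ I can pick $\hat b\in u^{-1}(b)$; then $M_{\hat b}$ is a proper Hodge substructure of $K_d(u^*H)_{\hat b}\cong K_d(H_b)$, providing the desired proper substructure at every point of $B$.

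There is no serious obstacle here, since the essential analytic content (the existence of a nonzero fixed vector in $K_d$) has already been carried out in Theorem \ref{main}. The only point requiring care is that the paper's notion of isotriviality demands the Hodge filtration, and not just the underlying local system, to become trivial after pullback; this is precisely why the first part of Schmid's theorem must be invoked in the first case. Once that is observed, the corollary reduces to a clean dichotomy.
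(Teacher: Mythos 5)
Your proposal is correct and follows essentially the same route as the paper: pull back along the cyclic cover $u$ from Theorem \ref{main}, use Schmid's Theorem \ref{schmid} to see that the nonzero fixed part is a subvariation, and split into the cases where it is everything (isotriviality) or proper (a proper substructure $K_d(H_b)$ at every fiber). Your extra remark that Schmid's part (1) is what makes the Hodge filtration, and not merely the local system, constant in the first case is a correct elaboration of a step the paper states tersely ("then $u^*K$ is trivial"), not a deviation from its argument.
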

\begin{proof}
  Let $u:\hB \ra B$ be as in the Theorem \ref{main}.  Set $K:=K_d(H)$.
  Consider the variation of Hodge structure $u^*K$ on $\hat{B}$.  By
  Schmid theorem \ref{schmid} $(u^* K)^\inv$ is subvariation of $u^*K$
  and we know from Theorem \ref{main} that $(u^*K)^\inv\neq \{0\}$.
  If $(u^*K) ^\inv = u^*K$, then $u^*K$ is trivial and hence $K$ is
  isotrivial.  Otherwise for any $b\in B$ and $\hat{b} \in u\meno(b)$
  we have
  \begin{gather*}
    \{0\} \neq (u^*K)^\inv_{\hat{b}} \subsetneqq (u^*K)_{\hat{b}}=K_b.
  \end{gather*}
\end{proof}

\begin{lemma}
  Let $\barf:\barX \ra \barB$ be as in \ref{mainsay} and let
  $d:= \rang U >0$.
  \label{lemure1} If $K_d(H)$ is isotrivial, then the Hodge bundle
  $V\restr{B}$ is flat.
\end{lemma}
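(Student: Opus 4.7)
The plan is to show that $V\restr{B}$ is preserved by the Gauss--Manin connection $\nabla$; this immediately yields flatness of $D$, since $D$ then coincides with the restriction of the flat connection $\nabla$. Equivalently, the strategy is to prove that the second fundamental form $\sigma : V\restr{B} \ra H^{n-1,1} \otimes \Omega^1_B$ of $V\restr{B} \subset H_\C$ vanishes.

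The starting observation is that, in both parities of $n$, one has $K_d(H)^{nd,0} = \est^d V\restr{B}$: for $n$ even this is clear from $K_d = \est^d H$, and for $n$ odd it follows from \ref{sayzero} together with the trivial inclusion $(E_d(H))^{nd,0} \subset (\est^d H)^{nd,0} = \est^d V\restr{B}$. Since $K_d(H)$ is isotrivial, it becomes trivial after a finite base change, so its Kodaira--Spencer map vanishes; the pullback being zero on a surjective cover forces the Kodaira--Spencer of $K_d(H)$ on $B$ to vanish as well. In particular the map $T_b B \otimes \est^d V\restr{B}_b \ra K_d(H)^{nd-1,1}_b$ is zero for every $b \in B$.

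Next I would compute this Kodaira--Spencer via the Leibniz rule inside $\est^d H$: on a decomposable $v_1 \wedge \cds \wedge v_d \in \est^d V\restr{B}$, in the direction $\xi \in T_bB$ it equals
\[
\sum_{i=1}^d v_1 \wedge \cds \wedge \sigma_\xi(v_i) \wedge \cds \wedge v_d,
\]
viewed in $(\est^d H)^{nd-1,1} \cong \est^{d-1} V\restr{B}_b \otimes H^{n-1,1}(X_b)$. The identification is provided by the wedge product, which is an isomorphism here because $V\restr{B}$ and $H^{n-1,1}$ are transverse in $H_\C$. By the previous paragraph this expression must vanish for all $v_1,\dots,v_d \in V\restr{B}_b$ and all $\xi \in T_bB$.

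The crux is then an isolation trick: choose a basis $v_1,\dots,v_{d-1}$ of $U_b$ and an arbitrary $v_d \in A_b$. Since $U$ is $\nabla$-stable the second fundamental form vanishes on $U$, so every summand with $i<d$ is zero and the identity reduces to
\[
v_1 \wedge \cds \wedge v_{d-1} \otimes \sigma_\xi(v_d) = 0.
\]
Because $v_1 \wedge \cds \wedge v_{d-1}\neq 0$ in $\est^{d-1}U_b$, tensoring with it is injective, forcing $\sigma_\xi(v_d)=0$; as $v_d \in A_b$ and $\xi \in T_bB$ are arbitrary, $\sigma\restr{A} = 0$, and combined with $\sigma\restr{U}=0$ this yields $\sigma = 0$. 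I expect the main difficulty to be the bookkeeping around $K_d(H)^{nd,0}$ and the verification that the Kodaira--Spencer of $K_d(H)$ really is obtained by restricting the Leibniz formula on $\est^d H$ to the appropriate subspace.
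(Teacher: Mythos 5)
Your argument is correct, but it takes a genuinely different route from the paper's. The paper stays inside the metric picture: it first shows that $\est^d \vr$ is preserved by $\est^d\nabla$ (same use of isotriviality as yours, via $(\est^d H)^{nd,0}=K_d(H)^{nd,0}=\est^d\vr$), then identifies $\est^d D$ with $\est^d\nabla$ on $\est^d\vr$ by uniqueness of metric-compatible connections, and finally isolates the summand $\est^{d-1}U\restr{B}\otimes A\restr{B}$ of $\est^d\vr$ to get $0=I_{\est^{d-1}U}\otimes R_A$, hence $R_A=0$ and flatness of $(\vr,D)$; the stronger statement that $\nabla$ preserves $\vr$ is only extracted afterwards, via Lemma \ref{lemma7}. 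You instead bypass the Hermitian metric and the curvature computation entirely and prove the $\nabla$-stability of $\vr$ directly: vanishing of the top-level IVHS of $K_d(H)$ (a valid consequence of isotriviality, since the branched cover is generically a local isomorphism and the IVHS is holomorphic), the Leibniz expansion of the IVHS of $\est^d H$ on decomposables, and the isolation of the single term $v_1\wedge\cds\wedge v_{d-1}\wedge\sigma_\xi(v_d)$ using $\sigma\restr{U}=0$. Amusingly, your isolation step is the same ``tensor with $\est^{d-1}U$'' trick as the paper's Step 4, but applied to the second fundamental form rather than to the curvature. What each buys: your proof is more elementary and pointwise, avoids the identification $p'(\est^d\nabla)=\est^d D$, and yields the conclusion of Lemma \ref{lemma7} for free, so Theorem \ref{cor2} would follow without invoking that lemma; the paper's formulation keeps the statement purely about the flat unitary structure of $(V,D)$, in line with the Fujita decomposition setup. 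One small slip: $\rank U=d$, so $v_1,\lds,v_{d-1}$ is not a basis of $U_b$; you only need $d-1$ linearly independent vectors of $U_b$, which exist precisely because $\dim U_b=d$, and the argument is unaffected.
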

\begin{proof}
  Step 1: $\est^d\vr $ is preserved by $\est^d\nabla$.\\
  \noindent (Here $\est^d\nabla$ denotes the connection induced by
  $\nabla$ on $\est^d H$.)  It is easy to check that
  $(\est^d H)^{nd,0} = \est^dF^n$.  On the other hand it follows from
  \ref{sayzero} that $(\est^d H)^{nd,0} = K_d(H)^{nd,0}$.  Since
  $u^*K_d(H)$ is trivial for some base change $u: \hB \ra B$, its
  $(nd,0)$-component is preserved by $\est^d\nabla$, so also
  $K_d(H)^{nd,0} = \est^d F^n=\est^d \vr$ is preserved by
  $\est^d\nabla$.  \\ \noindent Step 2: $(\est^d \vr, \est^d D)$ is a
  flat bundle.  \\ \noindent Let $h$ denote the Hodge Hermitian
  product on $H_\C$ defined in \ref{mainsay}. Recall that $h >0$ on
  $F^n$ so the orthogonal projection $p: H_\C \ra F^n$ is
  well-defined.  Since
  $ (\est^d h) \restr{\est^d F^n} = \est^d ( h \restr{F^n}) $ and the
  right hand side is positive definite, also $\est^d h >0$ on
  $\est^d F^n$.  So the orthogonal projection
  $p': \est^d H_\C \ra \est ^d F^n$ is well-defined.  Moreover
  $ p' (\est^d \nabla) = \est ^ d D $, since both are compatible
  connections on the Hermitian bundle
  $(\est^d F^n, \est^ d h \restr{\est^d F^n})$.  Since
  $\est ^d F^n=\est^d \vr$ is preserved by $\est^d \nabla$, we
  conclude that $\est^d D = \est^d \nabla $ on $\est^d\vr$ and of
  course $\est^d D$ is flat on $\est^d \vr$.  This proves the claim.
  \\ \noindent Step 3 :
  $(\est^{d-1} U\restr{B} \otimes A \restr{B}, \est^d D)$ is flat.
  \\ \noindent $\est ^d \vr$ is a direct sum of various bundles, one
  of them being $\est^{d-1} U\restr{B} \otimes A \restr{B}$.  All
  these summands are preserved by the connection $\est^d D$, as
  follows from \ref{spezzocon}. Since $(\est^d \vr, \est^d D)$ is
  flat, every summand is flat with this connection.  \\ \noindent Step
  4: $(A\restr{B}, D^A)$ is flat.  \\ \noindent On
  $\est^{d-1} U\restr{B} \otimes A \restr{B}$ we have
  $ \est^d D = \est^{d-1} D^U \otimes D^A$.  Moreover $U$ is flat. So
  step 3 yields that
  \begin{gather*}
    0= R_{\est^{d-1}U \otimes A}= R_{\est^{d-1} U } \otimes I_A +
    I_{\est^{d-1}U } \otimes R_A = I_{\est^{d-1}U } \otimes R_A.
  \end{gather*}
  So $R_A= 0$ on $B$.  Thus $(\vr, D)$ is flat.
\end{proof}

{Recall that if $b_0\in B$ and $W \subset B$ is an open contractible
  neighbourhood of $b_0$, then the period mapping
  $\mathscr{P} ^{n,n} : W \ra \operatorname{Grass} (h^{n,0},
  H^n(X_{b_0}))$ associates to $b\in W$ the subspace
  $H^{n,0}(X_b) \subset H^n(X_b) \cong H^n(X_{b_0})$, see e.g. 
  \cite[p. 229]{voisin-libro,carlson-green-griffiths-harris}.
  The derivative of the period mapping at $b$ can be identified with the \emph{infinitesimal variation of Hodge structure}  (IVHS) map
 $T_{b}B \otimes H^{n,0}(X_b) \ra H^{n-1,1} (X_b)$ obtained by taking the cup product with the Kodaira-Spencer class.  
}

{
\begin{teo}\label{cor2}
  Let $\barf:\barX \ra \barB$ be as in \ref{mainsay} and let
  $d:= \rang U >0$.  Assume that for generic $b \in B$ the
  IVHS map
  $T_{b}B \otimes H^{n,0}(X_b) \ra H^{n-1,1} (X_b)$ is non-zero.  Then
  for any $b\in B$ the Hodge structure $K_d (H_b) $ admits a proper
  substructure.
\end{teo}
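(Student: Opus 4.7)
The plan is to combine Corollary \ref{bau}, Lemma \ref{lemure1} and Lemma \ref{lemma7} into a single chain of implications, using the hypothesis on the IVHS map to rule out the isotrivial alternative. Concretely, I would argue by dichotomy: Corollary \ref{bau} already gives that either $K_d(H_b)$ admits a proper Hodge substructure for every $b\in B$, in which case the theorem is proved, or $K_d(H)$ is isotrivial. Thus the entire task reduces to showing that isotriviality is incompatible with the assumption that the IVHS map is non-zero at a generic point.

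Assuming $K_d(H)$ is isotrivial, I would first apply Lemma \ref{lemure1} to deduce that the Hodge bundle $V\restr{B}$, equipped with the projected connection $D$, is flat. Lemma \ref{lemma7} then upgrades this to the stronger statement that $V\restr{B} = F^n$ is preserved by the full Gauss--Manin connection $\nabla$. By Griffiths transversality $\nabla F^n \subset F^{n-1}\otimes \Omega^1_B$, and since $F^{n-1}=F^n\oplus H^{n-1,1}$ as $C^\infty$-bundles (the Hodge decomposition being orthogonal with respect to $h$), the preservation of $F^n$ forces the second fundamental form $\sigma:F^n\to H^{n-1,1}\otimes \Omega^1_B$ to vanish identically on $B$.

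The final step is to identify $\sigma$ with the IVHS map. By the description recalled immediately before the statement of the theorem, the fibrewise map $\sigma_b:H^{n,0}(X_b) \to H^{n-1,1}(X_b)\otimes T^*_bB$ is adjoint to the cup product $T_bB\otimes H^{n,0}(X_b)\to H^{n-1,1}(X_b)$, i.e.\ to the IVHS map at $b$. Hence $\sigma\equiv 0$ on $B$ would imply that the IVHS map vanishes at every $b\in B$, contradicting the hypothesis. Therefore $K_d(H)$ cannot be isotrivial, and the conclusion of Corollary \ref{bau} leaves only the desired alternative.

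The main obstacle is not conceptual but notational: one must match the second fundamental form used in the proof of Lemma \ref{lemma7} with the IVHS map in the statement of Theorem \ref{cor2}, checking that the vanishing of the former at $b$ is the same as the vanishing of the latter. Once this identification is in place, the three preceding results combine into a short proof with no further calculation.
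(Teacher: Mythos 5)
Your proposal is correct and follows essentially the same route as the paper: Corollary \ref{bau} gives the dichotomy, Lemma \ref{lemure1} and Lemma \ref{lemma7} show that isotriviality of $K_d(H)$ forces $\nabla$ to preserve $F^n$, and this contradicts the non-vanishing of the IVHS map. The only difference is that you spell out explicitly the identification of the second fundamental form of $F^n\subset H_\C$ with the IVHS map, a step the paper leaves implicit (it simply asserts that the map would be trivial), so your write-up is if anything slightly more detailed.
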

}
\begin{proof}
  By Corollary \ref{bau} either $K_d (H)$ is isotrivial or for any
  $b\in B$ the Hodge structure $K_d(H_b) $ admits a proper
  substructure.  In the first case the Hodge bundle $\vr$ is flat by
  Lemma \ref{lemure1}.  Hence by Lemma \ref{lemma7} $\vr$ is preserved
  by $\nabla$.  {But then the map
  $T_{b}B \otimes H^{n,0}(X_b) \ra H^{n-1,1} (X_b)$ would be trivial
  for any $b\in B$, contrary to the assumption.  This proves that for
  any $b\in B$ the Hodge structure $K_d(H_b) $ admits a proper
  substructure.}
\end{proof}

If $C$ is a smooth curve we denote by $\mt(C)$ the Mumford-Tate group
of $H^1(C)$.  The following fact is well-known. We recall the proof
for the reader's convenience.
\begin{prop}
  \label{vgmt}
  If $[C]$ is very general in $\Mg$, then the Mumford-Tate group
  $\mt(C) $ $ =$ $ \gsp(H^1(C,\QQ))$.
\end{prop}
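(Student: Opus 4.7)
The plan is to prove both inclusions. The inclusion $\mt(C) \subseteq \gsp(H^1(C,\QQ))$ is automatic: the polarization $Q$ is by definition a Hodge tensor of type $(0,0)$ in $(H^1)^{*}\otimes (H^1)^{*}$ (up to a Tate twist), so by the characterizing property of the Mumford-Tate group recalled in \ref{saymt} every element of $\mt(C)$ preserves $Q$ and therefore lies in $\gsp(H^1(C,\QQ))$. For the reverse inclusion, fix a finite \'etale cover $S \to \mg$ (e.g.\ a level cover) over which the universal curve $f : X \to S$ exists, and consider the polarized variation of Hodge structure $(S, H = R^1f_{*}\QQ, F^{\bullet}, Q)$. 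By the discussion in \ref{saylocus}, for each pair of multi-indices $(d,e)$ and each rational tensor $t \in T^{d,e}H_0$, the locus $Y(t)$ on which $t$ becomes a Hodge class is either all of the universal cover or a countable union of proper analytic subsets. Running over the countably many $t$'s gives a very general subset $S^{\circ}\subseteq S$ on which $\mt(H_s)$ equals a fixed group $G \subseteq \gsp_{2g}$, the generic Mumford-Tate group; I will show $G = \gsp_{2g}$.

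The main input is Deligne's theorem asserting that for $s\in S^{\circ}$ the identity component of the algebraic monodromy group $\mathrm{Mon}^{0}(H,s)$ is a normal $\QQ$-algebraic subgroup of the derived group $G^{\mathrm{der}}$ (see e.g.\ \cite{moonen-oort, voisin-hodge-loci}). For the universal family over $\mg$ it is classical that the symplectic monodromy representation $\pi_1(\mg) \to \Sp_{2g}(\Zeta)$ is surjective, hence $\mathrm{Mon}^{0} = \Sp_{2g}$ as a $\QQ$-algebraic group. Because $\Sp_{2g}$ is $\QQ$-simple, normality forces $G^{\mathrm{der}} \supseteq \Sp_{2g}$; combined with $G^{\mathrm{der}} \subseteq \gsp_{2g}^{\mathrm{der}} = \Sp_{2g}$, this yields $G^{\mathrm{der}} = \Sp_{2g}$.

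To promote $\Sp_{2g}$ to $\gsp_{2g}$, note that the Hodge structure has weight $1$, so the cocharacter $\rho \circ w : \mathbb{G}_m \to \gsp_{2g}$ sends $a$ to $a \cdot \mathrm{id}$, and its image together with $\Sp_{2g}$ generates all of $\gsp_{2g}$. Since $G$ contains $\im \rho$ by the very definition of the Mumford-Tate group, this gives $G = \gsp_{2g}$, as desired. The only non-formal ingredient is Deligne's monodromy-versus-Mumford-Tate theorem; granted it, the rest reduces to an elementary algebraic-group computation inside $\gsp_{2g}$.
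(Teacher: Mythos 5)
Your proposal is correct and follows essentially the same route as the paper: both pass to a level cover carrying a universal family, use that the monodromy representation has image of finite index in $\Sp(2g,\Zeta)$, and invoke a standard comparison between monodromy and Mumford--Tate groups at a very general point to conclude that $\mt(C)$ contains $\Sp(H^1(C,\QQ))$. The only difference is packaging: the paper quotes Voisin's Cor.~4.12 (the Mumford--Tate group of a very general fiber contains a finite index subgroup of the monodromy group) together with arithmeticity and the Borel density theorem, whereas you quote the Andr\'e--Deligne theorem that $\mathrm{Mon}^0$ is a normal subgroup of the derived generic Mumford--Tate group, and you spell out the final promotion from $\Sp_{2g}$ to $\gsp_{2g}$ via the weight cocharacter, a step the paper leaves implicit.
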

\begin{proof}
  Fix $m\geq 3$. Let $\Ga_g $ be the mapping class group and let
  $\Ga_g[m]$ be the kernel of the composition
  $\Ga_g \ra \Sp(2g, \Zeta) \ra \Sp(2g, \Zeta / m)$. The moduli space
  $\M_g^{(m)}$ of genus $g$ curves with level $m$ structure is the
  quotient of the Teichm\"uller space $\T_g$ by the properly
  discontinuous and free action of $\Ga_g[m]$.  Over $\M_g^{(m)}$
  there is a universal family $\pi:\mathscr{C} \ra \Mm_g$ and a
  corresponding integral variation of Hodge structure on
  $R^1\pi_* \Zeta$.  Set $H:= H^1(C, \Zeta)$ and $ \mt (C) = \mt(H_\QQ)$,
  where $t:=[C]$ is very general in $\Mg^{(m)}$.  The image of the
  monodromy map $\rho: \pi_1(\Mm_g , t) \cong \Ga_g[m] \ra \Gl(H) $ is
  a finite index subgroup of $\Sp(H)$, see e.g.
  \cite[p. 170]{farb-margalit}.  Since $t$ is very general, it follows
  from Cor. 4.12 in \cite{voisin-hodge-loci} that $\mt(C)$ contains a
  finite index subgroup $\Ga'$ of $\im \rho$.  Then $\Ga'$ has finite
  index in $\Sp(H)$, thus it is an arithmetic subgroup of $\Sp(H)$.
  By Borel density theorem (see e.g \cite{borel-density-Crelles},
  \cite[p. 205]{platonov-rapinchuk}) $\Ga'$ is Zariski dense in
  $\Sp(H_ \QQ)$. Thus $\mt(C) $ contains $\Sp(H_ \QQ)$.
\end{proof}

\begin{prop}
  \label{312} Let $C$ be a curve with $\mt(C) =\gsp(H)$.  If
  $\barf : \barX \ra \barb$ is a fibration as in \ref{mainsay} with
  $C \cong X_t$ for some $t\in B$, then either the fibration is
  isotrivial or $U=\{0\}$ in \eqref {Fujita}.
\end{prop}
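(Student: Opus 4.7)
The plan is to assume $d := \rank U > 0$ and show that the fibration must be isotrivial. The main ingredient is Corollary \ref{bau}: since $n = 1$ one has $K_d(H) = E_d(H)$, and the corollary yields the dichotomy that either $E_d(H)$ is isotrivial on $B$, or for every $b \in B$ the Hodge structure $E_d(H_b)$ admits a proper substructure. I will first eliminate the second alternative by testing it at the fiber $b = t$, using the hypothesis $\mt(C) = \gsp(H)$, and then argue that the remaining isotrivial case forces the whole fibration to be isotrivial.

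For the first step, at $b = t$ one has $H_t = H^1(C,\QQ)$ with $\mt(H_t) = \gsp(H_t)$, and by \ref{sayspsp} the space $E_d(H_t)$ is an irreducible representation of $\Sp(H_t)$, hence also of $\gsp(H_t)$. A proper Hodge substructure $L \subsetneq E_d(H_t)$ would produce, via orthogonal projection (see \ref{projector}), an element $p_L \in (\End E_d(H_t))^{0,0}$ distinct from $0$ and $\id$; regarded as a Hodge $(0,0)$-class in a suitable tensor space built from $H_t$, the characterisation of Mumford-Tate invariants in \ref{saymt} forces $p_L$ to commute with the $\gsp(H_t)$-action on $E_d(H_t)$. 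Schur's lemma then forces $p_L$ to be a scalar, contradicting the fact that it is a non-trivial projector. Hence the second alternative fails at $t$, and $E_d(H)$ must be isotrivial.

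For the second step, I would apply Lemma \ref{lemure1} to conclude that $\vr$ is flat, and Lemma \ref{lemma7} to see that it is preserved by $\nabla$. Since $n = 1$ the whole Hodge filtration is $F^1 = \vr \subset F^0 = H_\C$, so $F^\bullet$ is $\nabla$-flat. Therefore the monodromy representation $\rho : \pi_1(B,t) \to \Sp(H_t)$ preserves $F^1$, its complex conjugate $H^{0,1}$, and the polarisation, which places $\im \rho$ inside a maximal compact subgroup of $\Sp(H_t)$. As $\im \rho$ is also discrete, it is finite; pulling back along the finite étale cover that kills $\rho$ yields a trivial polarised VHS, and by Torelli's theorem for curves the pulled-back family is constant up to automorphisms of a fixed curve, a twist that is trivialised after a further finite étale cover. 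This gives the desired isotriviality of $\barf$.

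The step I expect to be the most delicate is the Schur lemma argument in paragraph two: one has to identify carefully which tensor space $T^{d,e}(H_t)$ hosts the projector $p_L$ and to check that its Mumford-Tate invariance translates into $\gsp(H_t)$-equivariance as an endomorphism of the irreducible representation $E_d(H_t)$, so that irreducibility can be used to exclude non-trivial projectors. The remaining steps are essentially bookkeeping with the tools already present in Section \ref{sec:fujita}, together with the standard passage from finite monodromy to isotriviality via Torelli.
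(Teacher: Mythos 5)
Your argument is correct and is essentially the paper's own proof: the paper likewise rules out proper Hodge substructures of $E_d(H_t)$ because $\mt(C)=\gsp(H)$ makes $E_d(H_t)$ an irreducible $\mt$-representation (your projector/Schur paragraph is exactly the justification left implicit there), and then concludes via Theorem \ref{cor2}, whose IVHS hypothesis is supplied by Torelli. The only difference is presentational: instead of citing Theorem \ref{cor2} you inline its ingredients (Corollary \ref{bau}, Lemma \ref{lemure1}, Lemma \ref{lemma7}) and invoke Torelli at the end (flat filtration $\Rightarrow$ constant period map $\Rightarrow$ isotrivial fibration) rather than at the start in contrapositive form, which is the same chain of lemmas run in the opposite direction.
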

\begin{proof}
  Assume that $d:=\rank U >0$.  Here the weight $n=1$, so
  $K_d(H) = E_d(H)$. Since $\mt(C) = \gsp(H)$, $E_d(H)$ is an
  irreducible representation of $\mt(C)$, see \ref{sayspsp}.  So it is
  an irreducible Hodge structure.  {Assume by contradiction that
    $f$ is not isotrivial.  The assuption of Theorem \ref {cor2} is
    satisfied by Torelli theorem. But Theorem \ref{cor2} implies that
    the Hodge structure is reducible, which gives a
    contradiction. Therefore $f$ is isotrivial.}
\end{proof}

\begin{teo}
  If $C$ is a very general curve in $\Mg$ and
  $\barf : \barX \ra \barb$ is a fibration as in \ref{mainsay} with
  $C \cong X_t$ for some $t\in B$, then either the fibration is
  isotrivial or $U=\{0\}$ in \eqref {Fujita}.
\end{teo}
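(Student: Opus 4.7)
The plan is to combine the two propositions immediately preceding the statement. The theorem follows almost directly: Proposition \ref{vgmt} tells us that a very general curve $C$ in $\Mg$ has Mumford-Tate group $\mt(C) = \gsp(H^1(C,\QQ))$. We can then feed the fiber $X_t \cong C$ into Proposition \ref{312}, whose hypotheses are now satisfied, and read off the desired dichotomy: either $\barf$ is isotrivial or $U = \{0\}$.

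Concretely, I would proceed as follows. First I would invoke Proposition \ref{vgmt} to obtain $\mt(C) = \gsp(H^1(C,\QQ))$. Next, having identified $C$ with $X_t$ for some $t \in B$, I would apply Proposition \ref{312} directly to the fibration $\barf : \barX \to \barB$. The conclusion of that proposition is exactly the statement to be proved, so no additional argument is required.

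There is no genuine obstacle here: the statement is essentially a corollary of Proposition \ref{312}, with Proposition \ref{vgmt} supplying the needed hypothesis on the Mumford-Tate group. The only mild point worth flagging is that ``very general'' in the statement must be understood in the sense used in Proposition \ref{vgmt} (i.e. outside a countable union of proper analytic subsets), which is automatic since the condition $\mt(C) = \gsp(H^1(C,\QQ))$ holds on such a locus.
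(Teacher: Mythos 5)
Your proposal is correct and coincides with the paper's own proof: the paper likewise applies Proposition \ref{vgmt} to obtain $\mt(C)=\gsp(H^1(C,\QQ))$ and then concludes directly by Proposition \ref{312}. Nothing further is needed.
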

\begin{proof}
  By Proposition \ref{vgmt} the Mumford-Tate $\mt(C) $ equals
  $\gsp(H)$. The result follows from Proposition \ref{312}.
\end{proof}

The following statement refines the previous one using the notion of
Hodge locus.
\begin{teo}\label{teolocus}
  Let $\barX$ be a surface and let $\barf : \barX \ra \barb$ be a
  fibration as in \ref{mainsay}. Assume that $\barf$ is not isotrivial
  and that $d :=\rank U > 0 $. Then the image of $B$ in $\Mg$ is
  contained in a proper Hodge locus $Z$ of $\Mg$.
\end{teo}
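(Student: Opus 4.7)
The plan is to apply Theorem \ref{cor2} in the curves case to produce a Hodge substructure on a cover of $B$, and then encode it as a single rational tensor class that defines a proper Hodge locus in $\M_g$ containing $\mu(B)$, where $\mu\colon B\to \M_g$ denotes the moduli map.

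First I would verify the hypothesis of Theorem \ref{cor2}. Since $\barf$ is non-isotrivial, the moduli map $\mu$ is non-constant; composing with the Torelli map $j\colon \M_g \to \A_g$, which is generically injective, yields a non-constant period map $B\to \A_g$, whose derivative --- the IVHS map $T_bB\otimes H^{1,0}(X_b)\to H^{0,1}(X_b)$ --- is therefore non-zero for generic $b\in B$. As $n=1$, we have $K_d(H)=E_d(H)$; inspecting the proof of Theorem \ref{cor2} via Theorem \ref{main}, Corollary \ref{bau} and Lemma \ref{lemure1} then provides an \'etale cyclic cover $u\colon \hB\to B$ such that $(u^*E_d(H))^{\inv}$ is a \emph{proper} non-trivial subvariation of $u^*E_d(H)$.

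The second step is to repackage this subvariation as a single rational tensor Hodge class. Let $p$ be the orthogonal projector of $u^*E_d(H)$ onto $(u^*E_d(H))^{\inv}$ with respect to the polarisation; by \ref{ortosay} the perpendicular is also a subvariation, hence $p$ is a rational parallel section of $\End(u^*E_d(H))$, and by \ref{projector} it is of Hodge type $(0,0)$ on every fibre. Via the inclusion $E_d(H^1(X_b))\subset \est^d H^1(X_b)$, the space $\End(E_d(H^1(X_b)))$ sits as a Hodge summand of the tensor construction $T^{d,d}(H^1(X_b))$. Lifting to the universal cover $\tilde{B}$ of $B$ (which also covers $\hB$) turns $p$ into a fixed rational vector $t\in T^{d,d}(H^1(X_{b_0}))$ whose associated flat section $\tilde{t}$ is of type $(0,0)$ at every point of $\tilde{B}$.

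Finally I would descend to $\M_g$. Since $H$ is the pullback via $\mu$ of the natural VHS on $\M_g$ (passing to a level-$m$ cover $\M_g^{(m)}$ if one needs an honest universal family), the class $t$ defines, through \ref{saylocus}, a Hodge locus $Z\subset \M_g$ containing $\mu(B)$. For properness, pick $[C]$ very general in $\M_g$: Proposition \ref{vgmt} gives $\mt(C)=\gsp(H^1(C,\QQ))$, and by \ref{sayspsp} $E_d(H^1(C))$ is an irreducible $\gsp$-representation, so by \ref{saymt} and Schur's lemma the Hodge classes in $\End(E_d(H^1(C)))$ are scalar multiples of the identity; since $p$ is a non-trivial idempotent (neither $0$ nor $\id$, the substructure being proper), its parallel transport to $C$ cannot be such a scalar, so $[C]\notin Z$ and $Z\subsetneq \M_g$.

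The delicate step that I expect to be the main obstacle is the second one: translating the sub-local-system supplied by Theorem \ref{cor2} into a single rational tensor class, since definition \ref{saylocus} detects Hodge loci via individual Hodge tensor classes rather than via subvariations. Once the encoding $p\leadsto t$ is in place, the rest --- containment of $\mu(B)$ in $Z$ and properness via the very general Mumford-Tate computation and Schur's lemma --- is routine.
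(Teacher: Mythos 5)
Your proposal is correct and follows essentially the same route as the paper: the substructure coming from Theorem \ref{cor2}/Corollary \ref{bau} is encoded as the rational $(0,0)$ projector in $\End(E_d(H^1))$ (via \ref{ortosay} and \ref{projector}), this class defines a Hodge locus through \ref{saylocus} containing the moduli image of $B$, and properness follows from Proposition \ref{vgmt} together with the irreducibility of $E_d$ as a $\gsp$-representation. The only (harmless) difference is how a single locus containing the image is obtained: the paper fixes the class at one fibre $b_0$ and uses countability of Hodge loci plus a Baire argument, whereas you exploit the flatness of the projector onto the fixed part over the \'etale cover so that one tensor class is of type $(0,0)$ along the whole lifted image.
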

\begin{proof}
  Fix $b \in B$ and let $L \subset E_d(H_b)$ be the Hodge substructure
  given by Corollary \ref{bau}. The orthogonal complement $L^\perp$ is
  also a Hodge substructure, so the orthogonal projection
  $p: E_d(H_b) \ra E_d(H_b)$ onto $L$ is a Hodge class in
  $(\End E_d(H_b))^{0,0}$.  Let $Z_b$ be the Hodge locus defined by
  this class, see \ref{saylocus}.  Thus the image of $B$ is contained
  in $\bigcup _b Z_b$. Since $B$ is irreducible and the Hodge loci can
  be at most countable, Baire theorem implies that the image of $B$ is
  in fact contained in $Z_{b_0}$ for some $b_0 \in B$.  It remains to
  show that $Z=Z_{b_0}$ is a proper subset of $\Mg$.  This follows
  from Proposition \ref{vgmt}: for a very general curve $C$ the space
  $E_d(H)$ is an irreducible representation of $\mt(C)$ (see
  \ref{sayspsp}), thus it is an irreducible Hodge structure.
\end{proof}

Let $\phi:\mathfrak{X} \ra S$ be the universal family of smooth
complete intersections of multidegree $d_1, \lds, d_k$ in $\PP^N$.

\begin{teo}
  \label{appli}
  Assume that $\sum_i d_i > N+1$ (i.e. the canonical bundle is
  ample). Let $s$ be a very general point of $S$.  If
  $\barf : \barX \ra \barb$ is a fibration as in \ref{mainsay} with
  $ X_t \cong \phi\meno(s)$ for some $t\in B$, then either the
  fibration is isotrivial or $U=\{0\}$ in \eqref {Fujita}.
\end{teo}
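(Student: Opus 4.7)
The plan is to mimic the argument of Proposition \ref{312} and Theorem \ref{teolocus}, replacing the role of a very general curve by that of a very general complete intersection with ample canonical bundle. Assume, for contradiction, that $\barf$ is not isotrivial and that $d := \rang U > 0$.

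First I would verify the hypothesis of Theorem \ref{cor2}, namely that the IVHS map $T_b B \otimes H^{n,0}(X_b) \to H^{n-1,1}(X_b)$ is non-zero for generic $b \in B$. The fiber $X_t$ is a smooth complete intersection with ample canonical bundle, and local (in fact infinitesimal) Torelli holds in this setting by classical results of Peters, Usui and Flenner. Since having ample canonical bundle is an open condition in smooth families, all nearby fibers $X_b$ are also smooth projective manifolds for which the infinitesimal Torelli statement applies. Non-isotriviality of $\barf$ forces the Kodaira--Spencer map to be non-zero at a generic $b \in B$, and infinitesimal Torelli then guarantees that the IVHS map is non-zero there. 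Theorem \ref{cor2} therefore implies that the Hodge structure $K_d(H^n(X_t)\prim)$ admits a proper Hodge substructure.

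Next I would establish that, for very general $s \in S$, the Mumford--Tate group $\mt(H^n(\phi\meno(s))\prim)$ equals the full similitude group: $\gsp$ for $n$ odd and $\go$ for $n$ even. The key input is the classical theorem of Deligne (SGA 7) and Beauville on the monodromy of the universal family of smooth complete intersections, which asserts that the image of the monodromy representation of $\phi$ on $H^n\prim$ is a finite index subgroup of $\Sp$ (for $n$ odd) or of $\mathrm{O}$ (for $n$ even). Arguing exactly as in the proof of Proposition \ref{vgmt}, using Corollary 4.12 of \cite{voisin-hodge-loci} together with the Borel density theorem, one deduces that the Mumford--Tate group of the primitive cohomology at a very general $s \in S$ is the full similitude group. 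By the representation-theoretic facts recalled in \ref{sayspsp} (for $n$ odd) and \ref{oo} (for $n$ even), the space $K_d(H^n(X_t)\prim)$ is then an irreducible representation of the Mumford--Tate group, hence an irreducible rational Hodge structure. This contradicts the conclusion of the previous paragraph and forces $d = 0$.

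The main technical points that require attention are: the applicability of infinitesimal Torelli to the fibers $X_b$ near $t$, which does not need them to be complete intersections themselves but only to lie in an open set where the relevant cohomological condition holds; and, in the even-$n$ case, the restriction $d \neq (\dim H^n\prim)/2$ required by \ref{oo} for irreducibility under $\mathrm{O}$. The latter is automatic since $d \leq h^{n,0}(X_t)$ is far smaller than half of the primitive Betti number for complete intersections of general type.
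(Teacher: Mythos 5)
Your proposal follows essentially the same route as the paper: irreducibility of $K_d(H^n\prim)$ under the full Mumford--Tate group of a very general complete intersection, combined with Peters' local Torelli theorem and Theorem \ref{cor2}, yields the contradiction. The differences are minor. For the Mumford--Tate statement the paper simply cites \cite[p.~17-18]{moonen-ps}, whereas you rederive it from the Deligne--Beauville big monodromy theorem together with the argument of Proposition \ref{vgmt} (Voisin's Cor.~4.12 plus Borel density); this is a legitimate alternative, essentially reproducing what the citation contains. For the boundary case $d=\demi\rank H$ (which is exactly where \ref{oo} gives no irreducibility for $n$ even), the paper treats it separately: $d=\demi\rank H$ forces $U=V$ in \eqref{Fujita}, so the whole Hodge bundle is unitary flat and \cite[Thm.~5.4]{peters-torelli} gives isotriviality directly. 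You instead exclude it by a dimension count; this is correct in substance, but ``far smaller than half the primitive Betti number'' should be justified, e.g.\ by noting that the family has nontrivial moduli and infinitesimal Torelli holds, so $h^{n-1,1}\prim>0$ (resp.\ $h^{1,1}\prim>0$ for $n=2$), whence $h^{n,0}<\demi\dim H^n\prim$ for $n$ even.

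The one point where your justification is genuinely shaky is the claim that the IVHS is non-zero at generic $b$ because ampleness of the canonical bundle is open and ``the relevant cohomological condition'' then holds on nearby fibers. Ample canonical bundle alone does not imply infinitesimal Torelli (it fails for certain surfaces of general type), so openness of ampleness buys you nothing; and you cannot retreat to the single fiber $X_t$ either, since non-isotriviality only guarantees a non-zero Kodaira--Spencer class at a generic $b$, not at $t$. The correct (and standard) fix, which is also what implicitly underlies the paper's appeal to \cite[Thm.~5.4]{peters-torelli} at generic $b$, is that smooth deformations of a complete intersection with ample canonical bundle are again complete intersections of the same multidegree in $\PP^N$, so every fiber $X_b$ is itself such a complete intersection and Peters' theorem applies at any $b$ where the Kodaira--Spencer class is non-zero. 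With that substitution your argument is complete and matches the paper's.
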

\begin{proof}
  Set $Y=\phi\meno(s)$ and $H:= H^n(Y,\Zeta)$.  By \cite[p. 17-18]
  {moonen-ps} $\mt(H) =\go(H) $ for $n $ even and $\mt(H) =\gsp(H) $
  for $n $ odd.  Assume $d:=\rank U >0$.  Start with the case
  $d \neq \demi \rank H$.  Using \ref{sayspsp} and \ref{oo} we deduce
  that $K_d(H)$ is an irreducible representation of $\mt(H)$, hence an
  irreducible Hodge structure.  {If $f$ is non-isotrivial, \cite
    [Thm. 5.4] {peters-torelli}, implies that the IVHS map
    $T_{b}B \otimes H^{n,0}(X_b) \ra H^{n-1,1} (X_b)$ is non-zero for
    generic $b \in B$.  But then Theorem \ref {cor2} yields a
    contradiction.  This proves that $f$ is isotrivial.

  If $d = \demi \rank H$, then $U=V$ in \eqref{Fujita}, so again by
  \cite [Thm. 5.4] {peters-torelli}, the fibration is isotrivial.}
\end{proof}

Also in this case we can refine the previous statement making use of
the notion of Hodge locus.  Let $\mathcal{M}$ be the moduli space of
smooth complete intersections of multidegree $d_1, \lds, d_k$ in
$\PP^N$.

\begin{teo}
  Assume that $\sum_i d_i > N+1$ (i.e. the canonical bundle is ample).
  Let $\barf : \barX \ra \barb$ be a fibration as in
  \ref{mainsay}. Assume that $\barf$ is not isotrivial and that
  $d:=\rank U >0$. Then the image of $B$ in $\mathcal{M}$ is contained
  in a proper Hodge locus of $\mathcal{M}$.
\end{teo}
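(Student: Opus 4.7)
The plan is to mirror the proof of Theorem \ref{teolocus}, substituting Peters--Torelli for complete intersections in place of Torelli for curves, and using the very-general Mumford--Tate computation of \cite{moonen-ps} instead of Proposition \ref{vgmt}. The overall architecture is to produce, for each $b \in B$, a proper Hodge substructure of $K_d(H_b)$; to package this into a Hodge locus $Z_b \subset \mathcal{M}$; to glue these countably many loci via a Baire argument into a single Hodge locus $Z$ containing the image of $B$; and finally to show that $Z$ is a proper subset of $\mathcal{M}$.

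First I would fix $b \in B$ and apply Theorem \ref{cor2} to produce a proper Hodge substructure $L \subset K_d(H_b)$. The IVHS hypothesis is provided by \cite[Thm.~5.4]{peters-torelli} together with the non-isotriviality of $\barf$. By \ref{ortosay} the orthogonal complement $L^\perp$ is again a Hodge substructure, so the orthogonal projection onto $L$ is a $(0,0)$-Hodge class in $\End K_d(H_b)$; it cuts out a Hodge locus $Z_b \subset \mathcal{M}$ in the sense of \ref{saylocus}. The image of $B$ in $\mathcal{M}$ is then contained in $\bigcup_{b \in B} Z_b$. Since $B$ is irreducible and there are only countably many Hodge loci in $\mathcal{M}$, Baire's theorem forces this image to lie in a single $Z := Z_{b_0}$.

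The main thing to verify, and the step I expect to be most delicate, is that $Z$ is a proper subset of $\mathcal{M}$. By \cite[pp.~17--18]{moonen-ps}, a very general complete intersection $Y$ has Mumford--Tate group $\gsp(H^n(Y))$ or $\go(H^n(Y))$ according to the parity of $n$. Combining this with \ref{sayspsp} and \ref{oo} shows that, when $d \neq \demi \rank H$, $K_d(H)$ is an irreducible representation of $\mt(H)$; hence it is an irreducible Hodge structure with no proper substructure, so a very general point of $\mathcal{M}$ cannot lie in $Z$. The obstacle is the middle case $d = \demi \rank H$, which can occur only for $n$ even and makes $\est^d H$ fail $\go$-irreducibility. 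I would handle it as in the proof of Theorem \ref{appli}: under this hypothesis one has $U = V$ in the Fujita decomposition, so the Hodge bundle is flat, and Peters--Torelli then forces $\barf$ to be isotrivial, contradicting our assumption and excluding this case.
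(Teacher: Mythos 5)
Your proposal is correct and coincides with the paper's own argument: the paper disposes of this statement by saying the proof is exactly the one of Theorem~\ref{teolocus}, i.e.\ the projector--Baire construction you describe, with properness of $Z$ coming from the Mumford--Tate computation of \cite{moonen-ps} together with \ref{sayspsp} and \ref{oo}. Your treatment of the exceptional case $d=\demi\rank H$ (forcing $U=V$ and then isotriviality via \cite[Thm.~5.4]{peters-torelli}, contradicting the hypothesis) is exactly how the proof of Theorem~\ref{appli} handles it, so nothing is missing.
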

The proof is exactly as the one for Theorem \ref{teolocus}.

\section{Examples}
\label{sec:examples}

In this Section we compute explicitly the Hodge substructure given by
Corollary \ref{bau} in two examples belonging to the infinite family
constructed by Catanese and Dettweiler in \cite[\S\S 3-4] {cadett-3}
in order to get counterexamples to a question of Fujita. They are
families of cyclic covers of $\PP^1$.

\begin{ex}
  Consider the 1-dimensional family of $\Zeta/5$-covers of $\PP^1$
  given by the equation
  \begin{gather*}
    y^5 = x(x-1)(x+1) (x-t)^2,
  \end{gather*}
  where $t\in \PP^1 \setminus \{0,1,-1\}$.  The $\Zeta/5$ action is
  given by $(x,y) \mapsto (x, \zeta_5 y)$, where $\zeta_5$ is a fixed
  fifth primitive root of unity.

  The normalizations of these curves provide a family of smooth
  projective curves of genus $4$ parametrized by $\PP^1 -\{0,1,-1\}$.
  As shown in \cite[\S\S 3-4] {cadett-3} after a base change one gets
  a complete fibration $ \bar{X} \ra \bar{B}$ as in
  \ref{mainsay}. (This is one of the fibrations found by Catanese and
  Dettweiler in order to get counterexamples to a question of Fujita.)
  This family of curves also coincides with the family (11) in Table 1
  of \cite{moo}.  (See also Table 2 in \cite{fgp}.) It yields a
  Shimura curve in $\A_4$ that is generically contained in the Torelli
  locus.

  For every $t \in B$ we have a representation $\rho$ of
  $\Zeta/5 = \langle g \rangle$ on $H^0({X}_t, K_{{X}_t})$ and a
  decomposition in eigenspaces
  $H^0({X}_t, K_{{X}_t}) = \oplus_{i=1,...,4} V_i$, where
  $V_i = \{v \in H^0(X_t, K_{X_t}) \ | \ \rho(g)(v) = \zeta_5^i v\}$.
  Using eq. (3.2) in \cite{cadett-3} one easily computes
  \begin{gather*}
    \dim V_1 = 0, \ \dim V_2 = \dim V_3 = 1, \ \dim V_4 = 2.
  \end{gather*}
  In the Fujita decomposition \eqref{Fujita}, we have that $U_t = V_4$
  for every $t \in B$.  In fact, for every $t \in B$,
  $ H^1(X_t, \C) = \oplus_{j=1,...,4}{\mathbb H}_j$, where
  ${\mathbb H}_j$ is the $\zeta_5^j$-eigenspace for the action of
  $\Zeta/5$ on $H^1(X_t, \C)$. We have
  ${\mathbb H}_j = V_j \oplus \overline{V_{5-j}}$. Hence
  \begin{gather*}
    {\mathbb H}_1 = \overline{V_4}, \ {\mathbb H}_2 = V_2 \oplus
    \overline{V_3}, \ {\mathbb H}_3 = V_3 \oplus \overline{V_2}, \
    {\mathbb H}_4 = V_4.
  \end{gather*}
  Therefore
  $(f_* {\omega_{\bar{X}/B}})_t = H^0(X_t, K_{X_t}) = V_2 \oplus V_3
  \oplus V_4$
  and $V_4 = {\mathbb H}_4 = U_t$, while $A_t = V_2 \oplus V_3$, see \cite{cadett-3}.

  The decomposition $H^1(X_t, \C) = \oplus_{j=1,...,4} {\mathbb H}_j$
  is defined over $\QQ(\zeta_5)$, that is there are subspaces
  $\bombolo_j \subset H^1(X_t, \QQ(\zeta_5))$ such that
  \begin{gather*}
    H^1(X_t, \QQ(\zeta_5)) = \oplus_{j=1,...,4} \bombolo_j,
  \end{gather*}
  and $\mathbb{H}_j = \bombolo_j \otimes \C$.  A generator of the
  Galois group $G:=Gal(\QQ(\zeta_5), \QQ) \cong (\Zeta/5)^*$ is given
  by $h: \QQ(\zeta_5) \ra \QQ(\zeta_5)$ defined by
  $h(\zeta_5) = \zeta_5^2$. Since $H^1(X_t,\QQ(\zeta_5))$ is defined
  over $\QQ$ there is a natural a representation
  $\sigma : G \ra \Gl_\QQ (H^1(X_t,\QQ(\zeta_5)))$. We have
  $\sigma(h)(\bombolo_j) = \bombolo_{k}$, with $k \equiv 2j$ $\mod 5$.
  So $\sigma(h)(\bombolo_1) = \bombolo_{2}$,
  $\sigma(h)(\bombolo_2) = \bombolo_{4}$,
  $\sigma(h)(\bombolo_4) = \bombolo_{3}$,
  $\sigma(h)(\bombolo_3) = \bombolo_{1}$. Therefore
  $ \oplus_{j=1}^4\est^2 \bombolo_j $ is a $G$-invariant subspace of
  $\Lambda^2 { H^1(X_t, \QQ(\zeta_5))}$, hence it is defined over
  $\QQ$.  Thus
  \begin{gather*}
    H_t:= \oplus_{i=1,...,4} \Lambda^2 {\mathbb H}_i = \Lambda^2
    \overline{V_4} \oplus (V_2 \otimes \overline{V_3}) \oplus (V_3
    \otimes \overline{V_2}) \oplus \Lambda^2 V_4
  \end{gather*}
  is a Hodge substructure as in Corollary \ref{bau} and clearly
  $ \Lambda^2 U_t = \Lambda^2 \mathbb{H}_4 \subset H_t$.

  It is also easy to check that $H_t$
  is a proper substructure of $E_2 (H^1(X_t, \QQ))$.  In fact
  using the notation of \ref{sayspsp}, observe that $\om
  \in \est^2 H^1(X_t, \QQ)^*$ is the cup product and $\phi_2(s) = \om
  \lrcorner s = \om (s)$. Hence if $s = \alfa\wedge \beta
  $ with $\alfa \in \mathbb{H}_i$ and $\beta \in \mathbb{H}_j$, then
  \begin{gather*}
    \phi_2(s) = \int_C \alfa \wedge \beta = \int_C \zeta_5^*(\alfa
    \wedge \beta ) = \zeta_5 ^{i+j} \int_C \alfa \wedge \beta .
  \end{gather*}
  Thus if $i+j
  \not \equiv 0 \mod 5$, $ \phi_2(s) = 0$. So $H_t \subset
  \ker\phi_2$.  Next recall that $E_2(H^1(X_t,
  \QQ))$ is simply the orthogonal space to $\QQ \om$ inside $\est^2
  H^1(X_t, \QQ)$, so $\dim E_2 = 27$.  Thus clearly $\{0\}\subsetneqq
  H_t \subsetneqq E_2 (H^1(X_t, \QQ))$.

\end{ex}

\begin{ex}
  Consider the 1-dimensional family of $\Zeta/7$-covers of $\PP^1$
  given by the equation
$$y^7 = x(x-1)(x+1) (x-t)^4.$$ 
(This is one of the examples of \cite{cadett-3} and also family (17)
in Table 2 of \cite{moo}.)  The general fiber has genus 6.  Using the
same notation and by the same analysis as in the previous example one
gets
\begin{gather*}
  \dim(V_1) = \dim(V_2) = 0, \ \dim(V_3)= \dim(V_4) = 1,  \ \dim(V_5) = \dim(V_6) = 2,\\
  H^1(X_t, \C) = \oplus_{j=1,...,6} {\mathbb H}_j,\qquad
  U_t = V_5 \oplus V_6 = {\mathbb H}_5 \oplus {\mathbb H}_6,\\
  {\mathbb H}_1 = \overline{V_6}, \ {\mathbb H}_2 = \overline{V_5}, \
  {\mathbb H}_3 = V_3 \oplus \overline{V_4}, \ {\mathbb H}_4= V_4
  \oplus \overline{V_3}, \ {\mathbb H}_5 = V_5, \ {\mathbb H}_6 = V_6.
\end{gather*}
A generator of the Galois group
$Gal(\QQ(\zeta_7), \QQ) \cong (\Zeta/7)^*$ is
$h: \QQ(\zeta_7) \ra \QQ(\zeta_7)$, $h(\zeta_7) = \zeta_7^3$.  Hence
$\sigma(h)(\bombolo_j) = \bombolo_{k}$, with $k \equiv 3j$ $\mod
7$.
Therefore $\sigma(h)(\bombolo_1) = \bombolo_{3}$,
$\sigma(h)(\bombolo_3) = \bombolo_{2}$,
$\sigma(h)(\bombolo_2) = \bombolo_{6}$,
$\sigma(h)(\bombolo_6) = \bombolo_{4}$,
$\sigma(h)(\bombolo_4) = \bombolo_{5}$,
$\sigma(h)(\bombolo_5) = \bombolo_{1}$.  So the subspace
\begin{gather*}
  H_t:=   (\Lambda^2 {\mathbb H}_5 \otimes \Lambda^2 {\mathbb H}_6) \oplus  (\Lambda^2 {\mathbb H}_1 \otimes \Lambda^2 {\mathbb H}_4) \oplus  (\Lambda^2 {\mathbb H}_3 \otimes \Lambda^2 {\mathbb H}_5)\\
  \oplus (\Lambda^2 {\mathbb H}_2 \otimes \Lambda^2 {\mathbb H}_1)
  \oplus (\Lambda^2 {\mathbb H}_6 \otimes \Lambda^2 {\mathbb H}_3)
  \oplus (\Lambda^2 {\mathbb H}_4 \otimes \Lambda^2 {\mathbb H}_2)
\end{gather*}
is a Hodge substructure and
$\Lambda^4U_t = \Lambda^2 {\mathbb H}_5 \otimes \Lambda^2 {\mathbb
  H}_6 \subset H_t$.
Since $\om \lrcorner s =0$ for any
$s \in \mathbb{H}_i \wedge \mathbb{H}_j $ if
$i+j \not \equiv 0 \mod 7$, one easily checks that
$H_t \subset E_4(H^1(X_t,\QQ))$. By a dimension count $H_t$ is proper
substructure.
 
\end{ex}

\begin{remark}
  The families in the two examples above yield Shimura curves
  contained in the Torelli locus. This is not the case for all the
  other examples constructed by Catanese and Dettweiler, thanks to
  \cite{moo}.  Nevertheless in all the examples of Catanese and
  Dettweiler the bundle $U$ is non-trivial and by computations similar
  to the previous one, one can describe the Hodge substructure given
  by Corollary \ref{bau} .
\end{remark}

\section{Hodge loci in the moduli space of curves}
\label{sec:hodge-moduli}

We start by recalling some facts concerning totally geodesic
subvarieties and Hodge loci in $\Ag$.
\begin{say}
  \label{siegelsay}
  Let $\om$ be the standard symplectic form on $\R^{2g}$.  The
  \emph{Siegel space} $\Hg$ is the set of complex structures on
  $\R^{2g}$ that are compatible with $\om$, i.e. such that
  $J^*\om=\om$ and $\om (\cd, J \cd ) >0$.  The group $G:=\Sp(2g, \R)$
  acts on $\Hg$ by conjugation. This action is transitive. If
  $J \in \Hg$, the stabilizer $G_J$ is the group of unitary
  transformations of $(\R^{2g}, J, \om(\cd, J\cd))$.  Fix $J \in \Hg$
  and a unitary basis $\{e_1, \lds, e_g\}$ of $V^{1,0}(J)$.  Set
  $e_{g+j } := \bar{e}_j$. Then $\{e_1, \lds, e_{2g}\}$ is a basis of
  $\C^{2g}$. In this basis
  \begin{gather}
    \label{rapresent}
    \lieg := \Lie G = \Bigl \{
    \begin{pmatrix}
      X &  Y \\
      \bar{Y} & \bar{X}
    \end{pmatrix}
    : X \in \liu(g), Y\in \sime \Bigr\},
  \end{gather}
  where $\sime$ denotes the space of complex symmetric matrices of
  order $g$ (see e.g. \cite[p. 78-79]{satake-libro}).  Denote by
  $K=G_J$ the stabilizer of $J$ for the $G$-action. Let $\liem$ be the
  $\Ad K$-invariant complement of $\liek:=\Lie K$ in $\lieg$.  We have
  the Cartan decomposition $\lieg = \liek \oplus \liem$.  It is easy
  to check that
  \begin{gather}
    \label{lium}
    \liek = \Bigl \{
    \begin{pmatrix}
      X &  0 \\
      0 & \bar{X}
    \end{pmatrix} 
    \Bigr\} \cong \liu(g), \qquad \liem= \Bigl \{
    \begin{pmatrix}
      0 &  Y \\
      \bar{Y} & 0
    \end{pmatrix} 
    \cong \sime.
  \end{gather}
  It follows that $\Hg$ is a Hermitian symmetric space of the
  noncompact type.  In terms of the identifications \eqref{lium} the
  isotropy representation of $\liek$ on $\liem$
  becomes 
  \begin{gather}
    \label{adk}
    \ad_\liek : \liek \ra \gl (\liem 
    ) , \quad
    \ad_\liek(X) (Y) = XY -Y\bar{X}.
  \end{gather}
\end{say}

\begin{say}
  \label{saysimsub}
  The fact that $\Hg$ is a symmetric space of the noncompact type has
  important consequences for its totally geodesic submanifolds. On the
  one hand these are necessarily symmetric spaces of their own.  On
  the other hand the closed totally geodesic submanifolds of $\Hg$
  passing through $J$ are in bijective correspondence with Lie triple
  systems $\liel$, that is with linear subspaces $\liel \subset \liem$
  such that $[[\liel, \liel],\liel] \subset \liel$, see
  e.g. \cite[p. 237]{KN-II}.
\end{say}

\begin{say}\label{agsay}

  The group $\Ga: = \Sp(2g, \Zeta)$ acts properly discontinuously and
  holomorphically on $\Hg$.  Hence $\Ag:=\Ga \backslash \Hg$ is a
  complex analytic global quotient orbifold. The symmetric metric of
  $\Hg$ descends to an orbifold K\"ahler metric on $\Ag$.  We will
  always consider this metric on $\Ag$. It is a locally symmetric
  (orbifold) metric.
\end{say}

\begin{say}
  \label{sayorbag}
  In Riemannian geometry a submanifold $N $ of a Riemannian manifold
  $(M,g)$ is called \emph{totally geodesic} if the second fundamental
  form of $N$ in $M$ vanishes identically.  We use the same
  terminology for suborbifolds of a Riemannian orbifold.  {More
    precisely, we will say that a subset $Z \subset \Ag$ is a
    \emph{totally geodesic subvariety}, if it is a closed algebraic
    subvariety of $\Ag$ and there is a totally geodesic submanifold
    $\tilde{Z}$ of $\Hg$ such that $\pi(\tilde{Z}) = Z$.}

  It has been proved by Mumford that special subvarieties, i.e. Hodge
  loci of $\Ag$, are totally geodesic,
  \label{mumford}
  see \cite{mumford-Shimura} and \cite{moonen-linearity-1}.

\end{say}

\begin{say}
  \label{saydefphik}
  For $1\leq k \leq g-1$ consider the map
  $ \phi_k : \A_k \times \A_{g-k} \lra \A_g$,
  \begin{gather}
    \label{zebra}
    \phi_k ([A_1, \chern_1(L_1)], [A_2, \chern_1(L_2) ]):= [ A_1
    \times A_2 , \chern_1( L_1 \boxtimes L_2) ].
  \end{gather}
  We will often drop the polarizations from the notation.  Set
  \begin{gather*}
    Z_k:=\phi_k(\A_k \times \A_{g-k}).
  \end{gather*}
\end{say}

\begin{prop}\label{oops}
  \begin{enumerate}[label=\emph{\alph*})]
  \item \label{oo1} $Z_k$ is a totally geodesic subvariety of $\A_g$.
  \item \label{oo2} $Z_k$ is maximal in the following sense: if $Z$ is
    a totally geodesic subvariety of $\Ag$ and $Z_k \subset Z$, then
    either $Z=Z_k$ or $Z=\Ag$.
  \item \label{oo3} Fix $[A_0]\in \A_k$ and set
    $ h : \A_{g-k} \lra \A_g $, $ h ([A]):= [A_0\times A]$.  If
    $Z\subset \A_g$ is a totally geodesic subvariety, then
    $h\meno (Z) $ is a totally geodesic subvariety of $\A_{g-k}$.
  \end{enumerate}
\end{prop}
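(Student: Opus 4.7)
The plan is to translate all three statements into the language of Lie triple systems in $\liem\cong \sime$, via the correspondence \ref{saysimsub} and the bracket formula \eqref{adk}. Fix a basepoint $J=J_k\oplus J_{g-k}\in \HH_g$ lying over a point of $Z_k$. Under the identification \eqref{lium}, the tangent space at $J$ to the orbit of $\Sp(2k,\R)\times \Sp(2(g-k),\R)$ is the block-diagonal subspace
\[
\liel := \Bigl\{\begin{pmatrix} Y_1 & 0 \\ 0 & Y_2\end{pmatrix} : Y_1\in \Sym_k,\ Y_2\in \Sym_{g-k}\Bigr\}\subset \sime.
\]
For \ref{oo1}, it suffices to observe that $\liel$ is a Lie triple system, which follows either by direct computation using \eqref{adk}, or from the fact that $\liel$ is the $\liem$-part of the $\theta$-invariant subalgebra $\sp(2k,\R)\oplus \sp(2(g-k),\R)$ of $\sp(2g,\R)$. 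The associated totally geodesic submanifold of $\HH_g$ is $\HH_k\times \HH_{g-k}$, which descends to $Z_k$ in $\Ag$.

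For \ref{oo2}, let $Z\subset \Ag$ be a totally geodesic subvariety containing $Z_k$, and choose a totally geodesic lift $\tilde Z\subset \HH_g$ containing $\HH_k\times \HH_{g-k}$. The tangent Lie triple system $\liel'$ of $\tilde Z$ at $J$ then satisfies $\liel\subset \liel'\subset \liem$, and it suffices to show that either $\liel'=\liel$ or $\liel'=\liem$. Assume $\liel\subsetneq \liel'$, pick $M\in \liel'\setminus \liel$, and subtract its block-diagonal part (which lies in $\liel\subset \liel'$) to obtain
\[
N=\begin{pmatrix} 0 & B \\ B^T & 0\end{pmatrix}\in \liel',\qquad B\neq 0.
\]
Since $[\liel,\liel]=\liu(k)\oplus \liu(g-k)$, the Lie triple condition forces $[(X_1,X_2),N]\in \liel'$ for every $X_1\in \liu(k)$ and $X_2\in \liu(g-k)$; a direct calculation with \eqref{adk} shows that the upper-right block of this element is $X_1 B + B X_2^T$. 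Iterating and taking real linear combinations, the upper-right blocks of elements of $\liel'\cap \liel^\perp$ generate a real subspace $W\subset M_{k\times(g-k)}(\C)$ containing $B$ and invariant under the action $B'\mapsto X_1 B' + B' X_2^T$. The choice $X_1=iI_k$, $X_2=0$ realises multiplication by $i$ inside this action, so $W$ is actually a complex subspace, and hence invariant under the complexified action of $\gl_k\oplus \gl_{g-k}$. But that complexified action is the standard tensor representation, which is irreducible, so $W=M_{k\times(g-k)}(\C)$. Therefore $\liel'\supset \liel^\perp$ and $\liel'=\liem$, i.e.\ $Z=\Ag$.

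For \ref{oo3}, the map $h$ lifts to the embedding $\tilde h:\HH_{g-k}\hookrightarrow \HH_g$, $\tau\mapsto \diag(\tau_0,\tau)$, where $\tau_0\in \HH_k$ represents $[A_0]$; this $\tilde h$ is totally geodesic by the argument of \ref{oo1} applied to the slice $\{\tau_0\}\times \HH_{g-k}$. For any totally geodesic subvariety $Z\subset \Ag$ with lift $\tilde Z\subset \HH_g$, the preimage $\tilde h^{-1}(\tilde Z)$ is locally totally geodesic in $\HH_{g-k}$, because at every intersection point the tangent Lie triple systems of $\tilde Z$ and $\tilde h(\HH_{g-k})$ meet in a Lie triple system. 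Taking $\Sp(2g,\Zeta)$-translates of $\tilde Z$ and passing to the orbifold quotient by $\Sp(2(g-k),\Zeta)$, which acts by isometries of the Siegel metric, yields that $h^{-1}(Z)$ is a totally geodesic subvariety of $\A_{g-k}$. The main obstacle is the irreducibility step in \ref{oo2}: the $\liu(k)\oplus \liu(g-k)$-orbit of a nonzero $B$ under $B'\mapsto X_1 B' + B' X_2^T$ need not itself be a complex subspace of $M_{k\times(g-k)}(\C)$, so one has to enlarge it using the central element $iI_k\in \liu(k)$ to produce a complex subspace before invoking complex irreducibility of the tensor representation.
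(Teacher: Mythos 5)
Your proposal is correct and follows the paper's own proof essentially step for step: part a) via the block-diagonal Lie triple system (equivalently the $\Sp(2k,\R)\times\Sp(2(g-k),\R)$-orbit), part b) by showing this Lie triple system is maximal through the irreducibility of the $\liu(k)\oplus\liu(g-k)$-action on the off-diagonal block $M_{k\times (g-k)}(\C)$, and part c) by lifting $h$ to a totally geodesic embedding of $\HH_{g-k}$ and intersecting totally geodesic lifts. The only (welcome, but cosmetic) difference is that you spell out, via the central element $iI_k$, why a real invariant subspace of the off-diagonal block is automatically complex, a point the paper subsumes in its appeal to the irreducibility of the outer tensor product of the standard representations.
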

\begin{proof}
  \ref{oo1}
  Fix $[J] \in Z_k $. Then
  $(\R^{2g}/\Zeta^{2g}, J) \cong A_1 \times A_2$. Set
  $V_i = T_0 A_i \subset \R^{2g}$.  Then $J(V_i) = V_i$. Moreover
  there are sublattices $\La_i \subset \Zeta^{2g}$, such that
  $V_i = \La_i\otimes \R$, $\La_1 \oplus \La_2 = \Zeta^{2g}$ and
  $\om\restr{\La_i} $ is a principal polarization i.e. a form of type
  $(1,\lds, 1)$.  Set $G':=\{ a \in \Sp(2g, \R): a (V_i)=V_i $ for
  $i=1,2\}^0$.  Fix a basis $\{e_1, \lds, e_k\}$ of $V_1^{1,0}$ and a
  basis $\{e_{k+1}, \lds, e_g\}$ of $V_2^{1,0}$.  Using the basis
  $\{e_1 , \lds, e_g\}$ as in \ref{siegelsay}, the matrix
  representation \eqref{rapresent} and the identifications
  \eqref{lium} we see that $\lieg' = \liek' \oplus \liem'$ , where
  \begin{equation}
    \label{liekprimo}     
    \begin{gathered}
      \liek'=\Bigl \{X =
      \begin{pmatrix}
        X_1 & 0 \\ 0 & X_2
      \end{pmatrix} , \quad X_1 \in \liu(k),\quad X_2 \in \liu(g-k)\Bigr\},\\
      \liem'= \Bigl \{ Y =
      \begin{pmatrix}
        Y_1 & 0 \\ 0 & Y_2
      \end{pmatrix}, \quad Y_1 \in \Sym_{k},\quad Y_2 \in \Sym_{g-k}
      \Bigr \} .
    \end{gathered}
  \end{equation}
  Thus $G'\cong \Sp(2k,\R)\times \Sp(2g -2k,\R)$ and
  $G'_J = \U(k)\times \U(g-k)$. It follows that
  $\tilde{Z}_k:= G' \cd J$ is a totally geodesic submanifold of $\Hg$
  isometric to $\sieg_k\times \sieg_{g-k}$.
{Clearly $\pi (\tilde{Z}_k) = Z_k$.}
  This proves \ref{oo1}.

\noindent
\ref{oo2} It is enough to check that $\tilde{Z}_k$ is a maximal
totally geodesic submanifold of $\Hg$.  Totally geodesic submanifolds
are in bijective correspondence with Lie triple systems in $ \liem$
see \ref{saysimsub}.  Thus it is enough to prove that the Lie triple
system $\liem' $ in \eqref{liekprimo} corresponding to $\tilde{Z}_k$
is a maximal Lie triple system in $\liem$.  Denote by $\uu$ the vector
space of complex $(g-k) \times k$ matrices.  The orthogonal complement
of $\liem'$ in $\liem$ with respect to the Killing form (which is a
multiple of the trace) is the space
\begin{gather}
  \liem'' =\Bigl \{ Y =
  \label{liemdue}
  \begin{pmatrix}
    0 & \tras{\xi} \\ \xi & 0
  \end{pmatrix}, \xi \in \uu\Bigr \}.
\end{gather}
Identify $\liem''$ with $\uu$ by the correspondence
$Y \leftrightarrow \xi$.  Then, using \eqref{adk}, for $X \in \liek'$
as in \eqref{liekprimo} and $Y \in \liem'$ as in \eqref{liemdue} we
get
\begin{gather*}
  \ad_\liek(X)(Y) = X_2 \xi - \xi \bar{X}_1 =X_2\xi + \xi\tras{X_1} .
\end{gather*}
Thus the representation of $\liek'$ on $\liem'$ reduces to the
representation
\begin{gather*}
  \label{repliu}
  \liu(k) \oplus \liu(g-k) \lra \gl (\uu),\quad (X_1, X_2) \cd \xi =
  X_2 \xi + \xi \cd\tras{X_1} .
\end{gather*}
This is an irreducible representation, since it is the outer tensor
product of the standard representations of $\liu(k)$ and $\liu(g-k)$,
see e.g. \cite[p. 197]{goodman-wallach}.  Thus $\liem''$ is an
irreducible $\liek'$-module.  Assume now that $\liem'''$ is a Lie
triple system such that $\liem ' \subset \liem ''' \subset \liem$.
Set $\lieq:= \liem''\cap \liem'''$. Then
\begin{gather*}
  [\liek ' , \lieq ] = [[\liem', \liem'], \lieq ] \subset [[ \liem''',
  \liem '''], \liem''']\subset \liem'''.
\end{gather*}
Moreover
$ [\liek ' , \lieq ] \subset [\liek ' , \liem'' ] \subset \liem''$.
Thus $[\liek ' , \lieq ] \subset \lieq$, i.e. $\lieq$ is a
$\liek'$-submodule of $\liem''$. Since $\liem''$ is an irreducible
$\liek'$-module, there are two possibilities: either $\lieq = \{0\}$
and $\liem'''=\liem'$, or $\lieq=\liem''$ and $\liem ''' = \liem$.
These possibilities correspond to $\tilde{Z}=\tilde{Z_k}$ and
$\tilde{Z} = \sieg_g$ respectively.  Thus $\tilde{Z}_k$ and $Z_k$ are
indeed maximal.

\noindent
\ref{oo3} Set $W:=h ( \A_{g-k})$.  Assume that
$A_0 \cong (V_0/\La_0, J_0, \om_0)$.  Choose complementary sublattices
$\La_i \subset \Zeta^{2g}$ such that $\om \restr{\La_i}$ is principal
and there is a symplectic isomorphism
$\eta : (\La_0, \om_0) \ra (\La_1, \om\restr{\La_1})$. Set
$V_i : = \La_i \otimes \R$.  Set
$\tilde{W} : = \{ J' \in \Hg: J(V_i) = V_i $ for $i=1,2$ and
$J\restr{V_1} = \eta J_0 \eta\meno\}$. Then  $\pi(\tilde{W}) =W $, so  $W$ is a
totally geodesic subvariety of $\Ag$.  Moreover $h$ admits a lifting
$\tilde{h}: \sieg_{g-k} \ra \Hg$ with image exactly $\tilde{W}$ and
this lifting is clearly an isometric immersion. Next let $Z\subset \Ag$ be
a totally geodesic subvariety. If $Z\cap W = \vacuo$ we have nothing
to prove. Otherwise we can choose a totally geodesic submanifold 
$\tilde{Z} \subset \Hg$ such that $\pi(\tilde{Z} )= Z$ and
$\tilde{W} \cap \tilde{Z} \neq \vacuo$. This intersection is totally
geodesic. Since $\tilde{h}$ is isometric,
$\tilde{h}\meno(\tilde{W} \cap \tilde{Z})$ is totally geodesic in
$\sieg_{g-k}$ and it is a connected component of
$\pi\meno (h\meno(Z))$. This proves \ref{oo3}.

\end{proof}

\begin{say}
  Let $\M_g$ and $\mb_g$ denote the moduli space of curves of genus
  $g$ and its Deligne-Mumford compactification.  Let $\dodo$ denote
  the set of points in $\mb_g$ that represent stable curves of the
  form $ E \cup_p C$, where $E$ is a smooth elliptic curve, $C$ is a
  smooth curve of genus $g-1$, $p\in C$ and the notation $E \cup_p C$
  means that $0\in E$ and $p\in C$ are identified.  Then
  $\Delta_1 = \overline{\dodo}$.  The map
  \begin{equation}
    \label{def-psi}
    \psi:      \M_{1,1} \times \M_{g-1,1} \lra \dodo,
    \qquad \psi([E],[C,p]) := [E\cup_pC],
  \end{equation}
  is an isomorphism.  Let
  $ \pi_1 :\M_{1,1}\times \M_{g-1,1} \ra \M_{1,1}$ and
  $ \pi_2 :\M_{1,1}\times \M_{g-1,1} \ra \M_{g-1,1}$ be the
  projections. Set
  \begin{gather}
    \label{def-bi}
    \bi_1:= \pi_1 \circ \psi\meno: \dodo \ra \M_{1,1}, \qquad \bi_2:=
    \pi_2 \circ \psi\meno: \dodo \ra \M_{g-1,1}.
  \end{gather}
\end{say}

The following result uses the same argument as in \cite[\S
4]{marcucci-naranjo-pirola}.
\begin{teo}
  \label{pietro}
  Assume $g\geq 4$.  If $Y\subset \M_g$ is an irreducible divisor and
  $\bary$ is its closure in $\mb_g$, then
  $\bi_1(\bary \cap \dodo) = \M_{1,1}$.
\end{teo}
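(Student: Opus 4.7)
The plan is to argue by contradiction via a divisor-class computation on the boundary divisor $\Delta_1$. If $\bary\supseteq\Delta_1$, then $\bary\cap\dodo=\dodo$ and $\bi_1$ is tautologically surjective; we may therefore assume $\bary\not\supseteq\Delta_1$, in which case $\bary\cap\Delta_1$ is an effective divisor on $\Delta_1$ of dimension $3g-5$. Via the isomorphism $\psi$ from \eqref{def-psi} the open part $\bary\cap\dodo$ corresponds to a codimension-one closed subset of $\M_{1,1}\times\M_{g-1,1}$.

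Since $\M_{1,1}$ is irreducible of dimension one, every irreducible component of $\bary\cap\dodo$ either dominates $\M_{1,1}$ under $\bi_1$ or is contained in a single fiber $\{[E]\}\times\M_{g-1,1}$, and it suffices to produce at least one component of the first kind. Assume for contradiction that every component is of the second kind; then there exist finitely many points $[E_1],\ldots,[E_N]\in\M_{1,1}$ such that every irreducible component of $\bary\cap\Delta_1$ meeting $\dodo$ is contained in one of the closed fibers $\{[E_i]\}\times\Mb_{g-1,1}$, while the remaining components of $\bary\cap\Delta_1$ lie in $\Delta_1\cap\Delta_j$ for some $j\geq 0$ (accounting for components supported in $\Delta_1\setminus\dodo$).

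The contradiction is extracted by pulling $[\bary]|_{\Delta_1}$ back to $\Mb_{1,1}\times\Mb_{g-1,1}$ via $\psi$, writing $[\bary]=a\lambda-\sum_j b_j\delta_j$ in $\operatorname{Pic}(\mb_g)\otimes\QQ$, and inspecting the $\pi_2^*$-component. Using the standard restriction formulas ($\psi^*\lambda=\lambda_1+\lambda_{g-1}$, the self-intersection formula for $\psi^*\delta_1$ in terms of cotangent classes at the two nodes, and analogous expressions for the remaining $\psi^*\delta_j$), the $\pi_2^*$-component of $\psi^*[\bary]|_{\Delta_1}$ is an explicit combination of $\lambda_{g-1}$, the cotangent class at the marked point of $\Mb_{g-1,1}$, and the boundary classes of $\Mb_{g-1,1}$. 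Under our standing assumption this combination must be supported only on boundary classes pulled back from the $\Delta_j\cap\Delta_1$ pieces, and in particular it must contain no $\lambda_{g-1}$- or cotangent-class term.

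The main obstacle, and the step where the hypothesis $g\geq 4$ is used, is to show that this condition forces $[\bary]=0$: for $g-1\geq 3$ the Hodge class $\lambda_{g-1}$, the cotangent class at the marked point, and the boundary classes of $\Mb_{g-1,1}$ are linearly independent in $\operatorname{Pic}(\Mb_{g-1,1})\otimes\QQ$, so the absence of a $\lambda_{g-1}$-term forces $a=0$, the absence of the cotangent-class term forces $b_1=0$, and analogous vanishings give $b_j=0$ for the remaining indices. This contradicts the fact that $\bary$ is a nontrivial effective divisor whose support is not contained in $\bigcup_j\Delta_j$, since $Y\subseteq\M_g$ is an irreducible divisor meeting $\M_g$.
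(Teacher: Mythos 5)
Your strategy---pulling the class of $\bary$ back to $\Mb_{1,1}\times\Mb_{g-1,1}$ and exploiting the independence of $\la$, the cotangent class and the boundary classes in $\Pic(\Mb_{g-1,1})\otimes\QQ$ for $g-1\geq 3$---is genuinely different from the paper's proof (which, for each fixed $[E]$, builds a complete surface $S\subset\dodo$ lying over $[E]$ and shows by a degree computation that $\bary$ must meet $S$), and it can be made to work; but as written it has two real gaps. First, your reduction is too weak for the stated equality: producing one component of $\bary\cap\dodo$ that dominates $\M_{1,1}$ only shows that $\bi_1(\bary\cap\dodo)$ is a constructible dense subset of the curve $\M_{1,1}$, i.e. it may still miss finitely many $j$-invariants. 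The closure of such a component does meet every fiber of $\Delta_1\ra\Mb_{1,1}$ by properness, but over a particular $[E]$ it could meet it only inside $\Delta_1\setminus\dodo$, where the genus $g-1$ part is singular; so ``it suffices to produce at least one component of the first kind'' is false for the claim $\bi_1(\bary\cap\dodo)=\M_{1,1}$. The repair is to run your restriction argument fiberwise: fix $[E]$, restrict $\OO(m\bary)$ along the gluing map $\Mb_{g-1,1}\ra\Mb_g$, $[C,p]\mapsto[E\cup_p C]$; if $\bary$ missed the locus where $C$ is smooth, the resulting effective class would be supported on the boundary of $\Mb_{g-1,1}$, while it also equals $a\la$ plus a cotangent-class term plus boundary classes, and independence forces $a=0$.

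Second, your concluding contradiction is not justified as stated. From the comparison you can only extract $a=0$ and $b_1=0$; the ``analogous vanishings'' for the remaining $b_j$ do not follow, because the other $\delta_j$ restrict to boundary classes of the two factors, which your contradiction hypothesis allows on the right-hand side. More importantly, the assertion that $a=0$ (equivalently, that an effective divisor meeting $\M_g$ cannot have class supported on boundary classes) is exactly the nontrivial input you still owe: $\M_g$ is only quasi-projective, so effectivity alone does not preclude the class from being a combination of $\delta_j$'s. What is needed is precisely the paper's step that $a\neq 0$, proved by exhibiting a complete curve $C\subset\M_g$ through a smooth point of $Y$ and transverse to $Y$ (this uses $g\geq 3$): all $\delta_j$ have degree $0$ on $C$ while $\OO(m\bary)\restr{C}$ has positive degree, whence $a\cd\deg(\la\restr{C})>0$. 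With $a\neq 0$ supplied and the computation made fiberwise as above, your Picard-group argument does yield the full statement; without it, the chain of vanishings has no contradiction to land on.
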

\begin{proof}
  Both $\Mg$ and $\Mb_g$ have quotient singularities and are therefore
  $\QQ$-factorial.   So we can find a line bundle $L \ra \mb$, a section
  $s\in H^0(\mb, L)$ and an integer $m>0$ such that $m\bary $ is the
  zero divisor of $s$.

  A basis for $\Pic(\mb_g)\otimes \QQ$ is given by
  $\{\la, \delta_0, \lds, \delta_{[g/2]}\}$, where $\la$ denotes the
  determinant of the Hodge bundle and $\delta_i$ are the boundary
  divisors. (These are not line bundles on $\mg$, but on the moduli
  stack $\overline{\mathcal{M}}_g$.  We are interested in properties
  that do not change when a divisor/line bundle is multiplied by a
  positive integer. So this is no harm.)

  In $\Pic(\mb_g)\otimes \QQ$ we have
  $ L \equiv a \la + \sum_id_i \delta_i$ for some $a, d_i \in \QQ$.
  It is well-known that $a\neq 0$.  (Given $x \in Y_{\mathrm{reg}}$
  there is a complete curve $C \subset \M_g$ such that $x \in C$ and
  $T_xC \pitchfork T_xY$, see \cite[Rmk. 4.1,
  p. 431]{marcucci-naranjo-pirola}.  Hence
  $\deg(L\restr{C}) = a \cd \deg (\la\restr{C}) >0$.)

  Since $g-1 \geq 3$ we can find a complete curve
  $B \subset \M_{g-1}$.  Next we fix an arbitrary elliptic curve
  $E$. For $b\in B$ denote by $\Ga_b$ the smooth curve corresponding
  to the moduli point $b$. For $p\in \Ga_b$ consider the nodal curve
  $\Ga_b \cup_p E$ obtained by gluing the points $p\in \Ga_b$ and
  $ 0 \in E$.  Varying $p$ and $b$ we get a complete surface
  $S \subset \dodo$.  Let $C \subset S$ be the curve obtained by
  fixing a particular value $b_0 \in B$.  Observe that
  $\Delta_i \cap S \neq \vacuo$ only for $i=1$.  Thus
  $\delta_i\restr{S} = 0$ for $i\neq 1$ and
  \begin{gather*}
    L\restr {S} \equiv a \la \restr{S} + d_1 \delta_1\restr{S}.
  \end{gather*}
  Moreveor $\la\restr{C} = 0$, since the Hodge structure does not vary
  on $C$.  Thus
  \begin{gather*}
    L\restr {C} \equiv d_1 \delta_1\restr{C}.
  \end{gather*}
  We claim that $\bary $ meets $ S$.  By contradiction assume that
  $\bary \cap S = \vacuo$.  Then $L\restr{S} \equiv 0$, so also
  $L\restr{C} \equiv 0$.  On the other hand there is a line bundle
  with non-zero degree on $C$, since $\mb_g$ is projective. Hence
  $\deg (\delta _1 \restr{C}) \neq 0 $.  It follows that $d_1=0$.  But
  then $L\restr{S} = a \la\restr{S} \equiv 0$.  Since $a \neq 0$, this
  implies that $\la\restr{S} \equiv 0$.  {But this is false:
    denote by $p_3 : \Delta_1^0 \ra \M_{g-1}$ the composition of
    $p_2 $ with the obvious projection $\M_{g-1,1} \ra \M_{g-1}$. Then
    $\la\restr{S} \cong (\bi_1^* \la_{\M_{1,1}} \otimes
    p_3^*\la_{\M_{g-1}})\restr{S}$ and $p_3\restr{S}: S \ra B$ has
    connected fibers, so $ (p_3)_* 
    (\la\restr{S}) = \la_{M_{g-1}}\restr{B}$.  Since
    $\la_{M_{g-1}} \cd B >0$, $\la\restr{S} \not \equiv 0$.} We have
  proved that $\bary \cap S \neq \vacuo$.  If
  $x \in \bary \cap S \subset \bary \cap \dodo$, then
  $\bi_1(x) = [E]$. Thus $[E] \in \bi_1 (\bary\cap \dodo)$. Since $E$
  is arbitrary the theorem is proved.  \mihi { $\Pic(\mathcal{M}_g)$
    is free abelian.  $\Pic(\mb)$ is a finite index subgroup: p. 381.
    Satake compactification: p. 435-437.  $\QQ$-factorial:
    Kollar-Mori, p.4.  Quotient singularities are $\QQ$-factorial:
    ibidem, p.158.  }
\end{proof}

The following definition is standard in Riemannian geometry. In some
sense it is the Riemannian analogue of the notion of non-degenerate
projective variety.
\begin{defin}
  An analytic subset $X \subset \Ag$ is \emph{full} if there is no
  proper totally geodesic subvariety $Z \subsetneqq \Ag$ that contains
  $X$.
\end{defin}

\begin{say}
  \label{bo}
  Let $\tilde{Z} \subset \Hg$ be a totally geodesic submanifold. Then
  the real codimension of $\tilde{Z} $ in $\Hg$ is at least $g$.
  Indeed by a theorem of Berndt and Olmos \cite{berndt-olmos} the real
  codimension of a totally geodesic submanifold of a Riemannian
  symmetric space is at least the rank of the symmetric space. Since
  the rank of $\Hg$ is $g$ the result follows immediately.
\end{say}

\begin{teo}
  \label{notg}
  Let $j : \Mg \ra \Ag$ be the period map.  If $g\geq 3$ and
  $Y \subset \M_g$ is an irreducible divisor, then $j(Y)$ is full in
  $\A_g$.
\end{teo}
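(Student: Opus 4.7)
The plan is to prove the theorem by induction on $g$. The base case $g=3$ follows from \ref{bo}: since Torelli is generically injective and $\dim_\C \M_3 = \dim_\C \A_3 = 6$, the image $j(Y)$ has real codimension $2$ in $\A_3$, while every proper totally geodesic subvariety of $\A_3$ has real codimension at least $\rank \sieg_3 = 3$. Hence no such subvariety can contain $j(Y)$.

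For the inductive step, take $g \geq 4$ and assume that every irreducible divisor in $\M_{g-1}$ is full in $\A_{g-1}$; this in particular forces $j_{g-1}(\M_{g-1})$ itself to be full, since otherwise any divisor of $\M_{g-1}$ would sit inside the same proper totally geodesic subvariety. Suppose for contradiction that $j(Y) \subset Z \subsetneq \A_g$ with $Z$ totally geodesic. The Torelli map extends to a morphism $\tilde{\jmath}: \M_g \cup \dodo \to \A_g$ by $\tilde{\jmath}([E\cup_p C]):=[E\times JC]$; since $Z$ is closed, $\tilde{\jmath}(\bary \cap \dodo) \subset Z$. By Theorem \ref{pietro}, $\bi_1(\bary \cap \dodo) = \M_{1,1}$, so for every $[E] \in \M_{1,1}$ the set $\widetilde{Y}_E := \bi_2\bigl(\psi^{-1}(\bary \cap \dodo \cap \bi_1^{-1}([E]))\bigr)$ is a non-empty closed subset of $\M_{g-1,1}$. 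Letting $q : \M_{g-1,1}\to \M_{g-1}$ be the forgetful map and $Y'_E := q(\widetilde{Y}_E)$, the estimate $\dim(\bary \cap \dodo) \geq 3g-5$ combined with the surjectivity of $\bi_1$ and the one-dimensional fibres of $q$ shows that, for $[E]$ in a dense subset of $\M_{1,1}$, the set $Y'_E$ is either all of $\M_{g-1}$ or contains an irreducible divisor of $\M_{g-1}$.

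The slice map $h_E : \A_{g-1} \to \A_g$, $h_E([A]):=[E\times A]$, satisfies $\tilde{\jmath}([E\cup_p C]) = h_E([JC])$, so $j_{g-1}(Y'_E) \subset h_E^{-1}(Z)$, and $h_E^{-1}(Z)$ is totally geodesic in $\A_{g-1}$ by Proposition \ref{oops}(c). The inductive hypothesis, applied either to $\M_{g-1}$ itself or to an irreducible divisor contained in $Y'_E$, forces $h_E^{-1}(Z) = \A_{g-1}$, hence $h_E(\A_{g-1}) \subset Z$. Varying $[E]$ in the dense subset and taking closures, one obtains $Z_1 = \phi_1(\A_1 \times \A_{g-1}) \subset Z$. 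By the maximality of $Z_1$ (Proposition \ref{oops}(b)) combined with $Z \neq \A_g$, we conclude $Z = Z_1$. The final contradiction uses that $j(\M_g) \cap Z_1 = \varnothing$: for a smooth curve $C$ of genus $g$ the theta divisor of $JC$ is the image of the symmetric product $C^{(g-1)}$, hence irreducible, while the theta divisor of any $[E \times A] \in Z_1$ splits as $\Theta_E \times A \,\cup\, E \times \Theta_A$; so no Jacobian of a smooth curve can lie in $Z_1$, contradicting $j(Y) \subset Z_1$.

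The main obstacle in this plan is the dimensional control on $Y'_E$ guaranteeing that, for $[E]$ in a dense subset of $\M_{1,1}$, the set $Y'_E$ either equals $\M_{g-1}$ or contains an irreducible divisor — this is precisely what allows the inductive hypothesis to be deployed. Once this reduction step is under control, the maximality assertion in Proposition \ref{oops} and the irreducibility of the theta divisor of a smooth curve conclude the induction cleanly.
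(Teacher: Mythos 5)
Your proposal is correct and follows essentially the same route as the paper: induction on $g$ with the $g=3$ base case via Berndt--Olmos, Theorem \ref{pietro} to produce, for each (or a dense set of) $[E]$, a divisor of $\M_{g-1}$ whose Jacobians appear in the slice $\{[E]\}\times \A_{g-1}$, then Proposition \ref{oops}\ref{oo3}, the inductive hypothesis, and the maximality of $Z_1$ from Proposition \ref{oops}\ref{oo2}. The only deviations are harmless: you work over a dense set of $[E]$ and close up at the end (the paper gets the fibre-dimension bound for every $[E]$), and you make explicit, via irreducibility of the theta divisor, the paper's one-line remark that $Z=Z_1$ is impossible because $Z$ contains Jacobians of smooth curves.
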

\begin{proof}
  Assume first that $g=3$. Then $\dim\M_3 =\dim\A_3= 6$. If
  $Y \subset \M_3$ is a hypersurface, also $j(Y)$ is a hypersurface.
  If $j(Y) $ is contained in a proper totally geodesic subvariety
  $Z$, this is also a hypersurface. Then $\tilde{Z} \subset \sieg_3 $
  would be a totally geodesic submanifold of real codimension 2. This
  is impossible by the theorem of Berndt and Olmos quoted above.

  We proceed by induction on $g$.  Assume that the result holds for
  $g-1$ and that $g\geq 4$.  Let $Z \subset \Ag$ be a totally geodesic
  subvariety and assume by contradiction that there is an algebraic
  hypersurface $Y\subset \M_g$ such that $j(Y) \subset Z$.  We want to
  prove that $Z=\Ag$.  The period map $j$ extends to
  $ \barj : \Mb_g - \Delta_0 \ra \Ag$ and
  $ \barj (\bary -\Delta_0) \subset Z$.

  We claim that for any $[E]\in \A_1$ there is an irreducible divisor
  $Y_E \subset \M_{g-1} $ such that
  \begin{gather*}
    \{[E]\} \times j(Y_E) \subset j(\bary - \Delta_0).
  \end{gather*}

  Let $\psi: \A_1\times \M_{g-1,1} \ra \Delta_1^0 \subset \Mb_g$ be the
  map $\psi([E], [C,p]) = [E\cup_p C]$ as in \eqref{def-psi}.  By
  Theorem \ref{pietro} the intersection $\bary \cap \Delta_1^0$ is
  non-empty and has dimension $3g -5$.  Since $\psi$ is an
  isomorphism, the set $W:= \psi\meno (\bary \cap \Delta_1^0) $ is a
  closed algebraic subset of $ \M_{1,1}\times \M_{g-1,1}$ of dimension
  $3g -5$.  Let
  \begin{gather*}
    \pi_1 : \M_{1,1}\times \M_{g-1,1} \ra \M_{1,1}, \qquad
    \pi_2  : \M_{1,1}\times \M_{g-1,1}  \ra \M_{g-1,1},  \\
    \pi_3 : \M_{g-1,1} \lra \M_{g-1} , \qquad q:= \pi_1\restr{W} : W
    \lra \M_{1,1},
  \end{gather*}
  be the obvious projections.  Fix $[E] \in \M_{1,1}$.  By Theorem
  \ref{pietro} there is $[C,p] \in \M_{g-1,1}$ such that
  $[E \cup _p C ] \in \bary$.  Then
  $q\meno ([E]) = \{ [E]\} \times W'$ for a closed subset
  $W' \subset \M_{g-1,1}$ with $\dim W' \geq 3g -6$.  Set
  $W'':= \pi_3 (W') \subset \M_{g-1}$.  Since $\dim W''\geq 3g-7$,
  ${W''}$ contains the generic point of some irreducible divisor $Y_E$
  of $\M_{g-1} $.  By construction for any $[C] \in W''$, there is
  $p\in C$ such that $[E\cup_p C ] \in W$.  Moreover we have
  $\barj ([E\cup_p C]) = [E \times J(C)]$ as polarized abelian
  varieties.  Thus
  $[E \times J(C)] = j ([E\cup_p C]) \in j(W) \subset j(\bary -
  \Delta_0)$.
  This holds in particular for the generic point of the divisor
  $Y_E$. Hence $\{[E]\}\times j (Y_E) \subset j (\bary - \Delta_0)$.
  The claim is proved.

  Now fix $[E] \in \A_1$ and set
  \begin{gather*}
    h : \A_{g-1} \lra \A_g , \qquad h ([A]):= [E\times A].
  \end{gather*}
  Since $ \barj (\bary -\Delta_0) \subset Z$, we have
  $h \circ j (Y_E) = \{[E]\}\times j(Y_E) \subset Z$.  Thus
  \begin{gather*}
    j (Y_E) \subset h\meno (Z).
  \end{gather*}
  By Proposition \ref{oops} \ref{oo3} $h\meno(Z)$ is a totally
  geodesic subvariety of $\A_{g-1}$ and it contains $j(Y_E)$.  By the
  inductive hypothesis $j(Y_E)$ is full in $\A_{g-1}$.  So we conclude
  that $h\meno (Z) = \A_{g-1}$.  Using the notation of \eqref{zebra}
  this means that $ \phi_1 (\{[E]\} \times \A_{g-1}) \subset Z$.  But
  $[E] \in \A_1$ is arbitrary, so we have in fact
  $ Z_1= \phi_1 (\A_1 \times \A_{g-1} ) \subset Z$.  By Proposition
  \ref{oops}\ref{oo2} $Z_1$ is a maximal totally geodesic subvariety
  of $\Ag$. Therefore either $Z=Z_1$ or $Z=\Ag$. The first possibility
  is absurd since $Z$ contains by hypothesis the jacobians of smooth
  curves. Hence we have proved that $Z=\Ag$, i.e. that $j(Y)$ is full.
\end{proof}

\begin{remark}
  This result of course implies that the Jacobian locus $j(\Mg)$
  itself is full for $g\geq 3$. This can be proved directly (and
  easily) using the same argument.
\end{remark}
\begin{remark}
  The second fundamental form of $\Mg$ in $\Ag$ has been studied in
  \cite{cpt} using the Hodge-Gaussian maps (see also \cite {pi}). It follows from the
  analysis in that paper that the second fundamental form is non-zero
  along Schiffer variations.  (See also \cite{cf1,cf2} for related
  results.)  Using this it has been proven in
  \cite[Thm. 4.4]{colombo-frediani-ghigi} that any totally geodesic
  submanifold of $\Ag$ that is generically contained in $j(\Mg)$ has
  dimension at most $ \frac{5}{2}(g-1) $.  In particular if
  $Y \subset \M_4$ a hypersurface, then $j(Y)$ is not totally geodesic
  in $\Ag$.  This yields a different proof of the theorem for $g=4$.
  Indeed assume by contradiction that $Z \subset \Ag$ is a proper
  totally geodesic subvariety and that $j(Y) \subset Z$.  By the
  theorem of Berndt and Olmos mentioned in \ref{bo},
  $\dim_\R Z \leq 16$.  Since $\dim_\R Y = 16$, $j(Y) $ would be open
  in $Z$, so $j(Y)$ would be totally geodesic.  But then we should
  have $8=\dim j( Y ) \leq \frac{5}{2} (g-1) = \frac{15}{2}$, a
  contradiction.
\end{remark}

\begin{cor}
  \label{hlmg}
  Let $g \geq 3$.  Let $Z \subset \A_g$ be a special subvariety,
  i.e. a Hodge locus for the canonical variation of Hodge structure on
  $\A_g$. Then $ j\meno (Z)$ has codimension at least two in $\M_g$.
\end{cor}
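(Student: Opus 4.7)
The plan is to reduce the corollary directly to Theorem \ref{notg} (fullness of images of divisors) combined with Mumford's theorem, recalled in \ref{mumford}, stating that special subvarieties of $\A_g$ are totally geodesic. Set $W := j^{-1}(Z) \subset \M_g$, and assume, aiming at a contradiction, that $W$ has codimension at most $1$ in $\M_g$. Since a special subvariety is, by its very definition, a proper subvariety of $\A_g$, Mumford's theorem makes $Z$ a \emph{proper} totally geodesic subvariety of $\A_g$. Hence to derive a contradiction it suffices to exhibit an irreducible divisor $Y \subset \M_g$ with $j(Y) \subset Z$ and then invoke Theorem \ref{notg}.

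I would split into two cases according to the codimension of $W$. If $\operatorname{codim} W = 1$, I simply take $Y$ to be any irreducible component of $W$; then $Y$ is an irreducible divisor in $\M_g$, $j(Y) \subset Z$ by construction, and Theorem \ref{notg} (which requires $g \geq 3$) asserts that $j(Y)$ is full in $\A_g$, contradicting the inclusion $j(Y) \subset Z \subsetneq \A_g$. If $\operatorname{codim} W = 0$, then $W = \M_g$, so $j(\M_g) \subset Z$; picking any irreducible divisor $Y$ of $\M_g$ (which certainly exists for $g \geq 3$, e.g.\ a component of the hyperelliptic locus or a Brill--Noether divisor) one again obtains $j(Y) \subset Z$ and the same contradiction. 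Alternatively, in this second case one may directly appeal to the remark following Theorem \ref{notg}, which records that $j(\M_g)$ itself is full for $g \geq 3$.

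There is essentially no obstacle: the whole content is packaged in Theorem \ref{notg} and in Mumford's totally geodesic property of special subvarieties. The only point deserving a line of comment is the passage from ``$W$ has codimension $\leq 1$'' to the existence of an irreducible divisor contained in $W$, which is immediate by taking an irreducible component. Thus the proof is essentially a one-paragraph deduction of the form: special subvariety $\Rightarrow$ proper totally geodesic $\Rightarrow$ cannot contain $j(Y)$ for any irreducible divisor $Y$ $\Rightarrow$ $j^{-1}(Z)$ has codimension $\geq 2$.
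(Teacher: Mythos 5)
Your argument is exactly the paper's: the corollary is deduced from Theorem \ref{notg} together with Mumford's theorem (\ref{mumford}) that special subvarieties are totally geodesic, the only difference being that you spell out the easy contradiction argument that the paper leaves implicit. The lone nitpick is that in the codimension-one case you should take an irreducible component of $j\meno(Z)$ that actually has codimension one (and note that $Z$ is tacitly assumed proper, as in the paper), but these are immediate.
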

\begin{proof}
  This follows immediately from Theorem \ref{notg} using the theorem
  of Mumford mentioned in \ref{mumford}.
\end{proof}

\begin{remark}
  It is well-known that for a very general $[A]$ in $ \Ag$ we have
  $\mt(A) = \gsp(H^1(A))$.  By Corollary \ref{hlmg} if $[C]\in \Mg$ is
  very general, then $j([C])$ belongs to no proper Hodge locus of
  $\Ag$, hence $\mt(C) = \gsp(H^1(C))$. This argument yields another
  proof of Proposition \ref{vgmt}.
\end{remark}

\begin{remark}
  It might be interesting to notice that the previous results give
  some information about the monodromy along a divisor.  Indeed let
  $Y\subset \Mg$ be an irreducible divisor and let $y\in Y$ be a very
  general point.  Let $C_y$ be a curve with moduli point $y$.  By the
  Corollary \ref{hlmg} $\mt(C_y) = \gsp(2g)$.  Denote by $\Gamma_y$
  the monodromy group at $y$ of the variation of Hodge structure over
  $Y$. Let $\overline{\Gamma}_y$ be the Zariski closure of $\Gamma_y$
  inside $\GL(g,\QQ)$ and let $H_y:=\overline{\Gamma}_y^0$ be the
  connected component of the identity.  By a result of Andr\'e
  \cite[Thm. 1]{andre}, $H_y$ is a normal subgroup of the commutator
  subgroup of $\mt(y)$.  (In Andr\'e's paper this result is formulated
  for variations of mixed Hodge structure. The statement in the pure
  case is simpler, see \cite[Thm. 16]{peters-steenbrink-monodromy}.)
  Since $\Sp(g,\QQ)$ is simple, this means that $H_y = \Sp(g,\QQ) $.
  Thus a finite index subgroup of the monodromy along $Y$ is Zariski
  dense in $\Sp(g,\QQ)$.
\end{remark}

\def\cprime{$'$}

\end{document}